\newtheorem{theorem}[equation]{Theorem}
\newtheorem{prop}[equation]{Proposition}
\newtheorem{lemma}[equation]{Lemma}
\newtheorem{cor}[equation]{Corollary}
\newtheorem{corollary}[equation]{Corollary}
\theoremstyle{remark}
\newtheorem{remark}[equation]{Remark}
\theoremstyle{definition}
\numberwithin{equation}{section}
\newcommand{\mes}{{\rm mes}}
\newcommand{\bO}{{\mathbf{O}}}
\newcommand{\fH}{{\mathfrak{H}}}
\newcommand{\fI}{{\mathfrak{I}}}
\newcommand{\integers}{{\bf Z}}
\newcommand{\ratls}{{\bf Q}}
\newcommand{\reals}{{\bf R}}
\newcommand{\bd}{{\bf d}}
\newcommand{\brc}{{\bar c}}
\newcommand{\brC}{{\bar C}}
\newcommand{\brg}{{\bar g}}
\newcommand{\brgamma}{{\bar\gamma}}
\newcommand{\brdelta}{{\bar\delta}}
\newcommand{\cK}{{\mathcal{K}}}
\newcommand{\cM}{{X}}
\newcommand{\cA}{{\mathcal{A}}}
\newcommand{\hP}{{\widehat{P}}}
\newcommand{\heps}{{\widehat{\eps}}}
\newcommand{\tD}{{\widetilde{d}}}
\newcommand{\tK}{{\widetilde{K}}}
\newcommand{\tL}{{\widetilde{L}}}
\newcommand{\tP}{{\widetilde{P}}}
\newcommand{\teps}{{\widetilde{\eps}}}
\newcommand{\cH}{{\mathcal{H}}}
\newcommand{\hyp}{{\mathcal{H}}}
\newcommand{\tg}{{\tilde g}}
\newcommand{\tgamma}{{\tilde\gamma}}
\newcommand{\tsigma}{{\tilde\sigma}}
\newcommand{\naturals}{\mathbb{N}}
\newcommand{\tr}{{\rm tr}}
\newcommand{\LL}{\mathbb{L}}
\newcommand{\eps}{{\varepsilon}}
\newcommand{\brx}{{\bar x}}
\begin{document}
\title[Small gaps]
{On small gaps in the length spectrum}

\author[D. Dolgopyat]{Dmitry Dolgopyat}
\address{Department of Mathematics, 
Mathematics Building, 
University of Maryland, 
College Park, MD 20742-4015, USA} 
\email{dmitry@math.umd.edu}

\author[D. Jakobson]{Dmitry Jakobson}
\address{Department of Mathematics and
Statistics, McGill University, 805 Sherbrooke Str. West, Montr\'eal
QC H3A 2K6, Ca\-na\-da.} \email{jakobson@math.mcgill.ca}

\thanks{D. D. was supported by NSF.
D. J. was partially supported by NSERC, FQRNT and Peter Redpath fellowship}

\subjclass[2000]{Primary 37C25, 53C22; Secondary 20H10, 37C20, 37D20, 53D25}

\begin{abstract}
We discuss upper and lower bounds for the size of gaps in the length spectrum of negatively curved manifolds.
For manifolds with algebraic generators for the fundamental group, we establish the existence of exponential
lower bounds for the gaps.  On the other hand, we show that the existence of arbitrary small gaps is topologically generic: this is established both for surfaces of constant negative curvature 
(Theorem \ref{ThTop-HS}), and for the space of negatively curved metrics 
(Theorem \ref{thm:smallgapneg}).  While arbitrary small gaps are topologically generic, it is plausible
that the gaps are not too small for almost every metric. One result in this direction is presented in 
Section 5.
\end{abstract}

\maketitle


\section{Introduction: geodesic length separation in negative curvature}

On negatively curved manifolds, the number of closed geodesics of length $\leq T$ 
grows exponentially in $T.$ 
(We refer the reader to \cite{MS, PP, PS} for a comprehensive discussion about 
the growth and
distribution of closed geodesics).

The abundance of closed geodesics leads to the natural question about the 
sizes of gaps in the length spectrum.
In the current note we present a number of results related to this question. 
In some situations we are able to control
the gaps from below, while in other we show that such control is not possible 
in general.

We note that a presence of exponentially large multiplicities in the length spectrum 
of a Riemannian manifold (which can be considered as a limiting case of small 
gaps) changes the level spacings distribution of Laplace eigenvalues on that manifold, 
see e.g. \cite{Luo:Sarnak}.   

For generic Riemannian metrics, the length spectrum is {\em simple} 
\cite{Abr,Anosov-generic}, so for 
any closed geodesic $\gamma$, only $\gamma^{-1}$ will have the same length.
So, by the Dirichlet 
box principle, there exist {\em exponentially small} gaps between the lengths of different 
geodesics.

Accordingly, it seems interesting to investigate manifolds where the gaps between the 
lengths of different geodesics have exponential {\em lower bound:} there exist 
constants $C,\beta>0$, such that for any $l_1\neq l_2\in {\rm
Lsp}(M)$ (length spectrum of the negatively curved manifold $M$), we have
\begin{equation}\label{separation}
|l_2-l_1|>Ce^{-\beta\cdot\max(l_1,l_2)}.
\end{equation}

This assumption
is satisfied for arithmetic hyperbolic groups by the trace separation criterion 
(cf. \cite{Takeuchi} and \cite[\S 18]{Hej}).
In Section~\ref{ScDiop2D}
we explain (see Theorem \ref{ThAlgGen}) why the assumption \eqref{separation}
holds for hyperbolic manifolds whose fundamental group has algebraic elements.

In particular, the surfaces satisfying \eqref{separation} form a dense set in the corresponding
Teichmuller space. 
On the other hand the existence of arbitrary small gaps is {\em topologically generic} 
as is shown in
Theorem \ref{ThTop-HS} for surfaces of constant negative curvature and in 
Theorem \ref{thm:smallgapneg} for the space of negatively curved metrics endowed 
with $C^r$-topology, for any $r>0.$

While arbitrary small gaps are topologically generic, it is plausible that the gaps 
are not too small
for almost every metric. One result in this direction is presented in 
Section \ref{ScH2-KR} there we obtain an explicit
lower bound for the gaps valid for almost every hyperbolic surface.

Length separation between closed geodesics is relevant for the study of 
wave trace formulas on 
negatively-curved manifolds: to accurately study contributions from exponentially 
many closed geodesics to the 
wave trace formula, it is necessary to separate contributions from geodesics which differ
either on the length axis, or in phase space. We remark that a suitable version 
of \eqref{separation} always holds in {\em phase space}: small tubular 
neighbourhoods of closed geodesics 
in phase space are disjoint, as shown in \cite{JPT}.  
Since there exist metrics for which the size of the length gaps 
cannot be controlled (Theorem \ref{thm:smallgapneg}), the authors 
in \cite{JPT} established microlocal 
wave trace formula, and used the separation of closed trajectories in 
phase space in the proof.  

\section{Diophantine results for hyperbolic manifolds.}
\label{ScDiop2D}

\subsection{Distances between algebraic numbers.}
In this section we consider gaps in the length spectrum for manifolds whose fundamental group
admits algebraic generators. But first we provide a few general results about the algebraic numbers.

\begin{lemma}
\label{LmSmallestRoot} 
If $\alpha$ is a root of $P(x)=x^D+a_{D-1} x^{D-1}+\dots+a_0$ then
$$ |\alpha|\geq \frac{|a_0|}{\left(1+\sum_{j=0}^{D-1} |a_j|\right)^{D-1}}. $$
\end{lemma}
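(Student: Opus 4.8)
The plan is to combine two classical facts about a monic polynomial and its roots: Vieta's formula for the constant term, and an a priori (Cauchy-type) bound on the moduli of all the roots. First dispose of the trivial case $a_0=0$, where the asserted lower bound is $0$ and there is nothing to prove; so assume $a_0\neq 0$, which forces every root of $P$ to be nonzero. Working over $\cx$, write the roots of $P$ (with multiplicity) as $\alpha=\alpha_1,\alpha_2,\dots,\alpha_D$, so that $P(x)=\prod_{i=1}^{D}(x-\alpha_i)$. Evaluating at $x=0$ gives $a_0=P(0)=(-1)^D\prod_{i=1}^{D}\alpha_i$, hence $|a_0|=\prod_{i=1}^{D}|\alpha_i|$, and therefore
\[
|\alpha|=\frac{|a_0|}{\prod_{i=2}^{D}|\alpha_i|}.
\]

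The next step is the Cauchy-type bound: every root $z$ of $P$ satisfies $|z|\le M$, where $M:=1+\sum_{j=0}^{D-1}|a_j|$. Indeed, if $|z|\le 1$ this is clear, while if $|z|>1$ then from $z^D=-\sum_{j=0}^{D-1}a_j z^j$ we obtain
\[
|z|^D\le\sum_{j=0}^{D-1}|a_j|\,|z|^{j}\le\Big(\sum_{j=0}^{D-1}|a_j|\Big)|z|^{D-1},
\]
so that $|z|\le\sum_{j=0}^{D-1}|a_j|\le M$. Applying this bound to $\alpha_2,\dots,\alpha_D$ gives $\prod_{i=2}^{D}|\alpha_i|\le M^{D-1}$, and substituting into the identity above yields
\[
|\alpha|=\frac{|a_0|}{\prod_{i=2}^{D}|\alpha_i|}\ \ge\ \frac{|a_0|}{M^{D-1}}=\frac{|a_0|}{\left(1+\sum_{j=0}^{D-1}|a_j|\right)^{D-1}},
\]
which is the claim. (For $D=1$ the product over $i\ge 2$ is empty, $M^{0}=1$, and the inequality is in fact an equality.)

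There is no serious obstacle; the only point deserving a moment of care is the split into the cases $|z|\le 1$ and $|z|>1$ in the Cauchy bound, which is exactly what lets the geometric-type sum $\sum_j|a_j|\,|z|^j$ be dominated by $\big(\sum_j|a_j|\big)|z|^{D-1}$. I would also note that a sharper estimate is available: factoring $P(x)=(x-\alpha)Q(x)$ with $Q$ monic of degree $D-1$ and comparing coefficients gives the recursion $b_{j-1}=a_j+\alpha b_j$ for the coefficients $b_k$ of $Q$, whence (when $|\alpha|\le 1$, the only nontrivial case) $|Q(0)|\le 1+\sum_{j=0}^{D-1}|a_j|$ while $|a_0|=|\alpha|\,|Q(0)|$, so $|\alpha|\ge|a_0|/\bigl(1+\sum_{j=0}^{D-1}|a_j|\bigr)$; but the weaker bound stated in the lemma is all that will be needed.
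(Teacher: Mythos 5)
Your proof is correct and follows essentially the same route as the paper: bound all roots of $P$ by $1+\sum_{j=0}^{D-1}|a_j|$ (a Cauchy-type estimate) and combine this with the identity $\prod_j|\alpha_j|=|a_0|$. The only cosmetic difference is that the paper verifies the root bound by showing $|P(x)|>0$ for $|x|>R$, while you argue directly from $z^D=-\sum_j a_j z^j$; these are the same estimate.
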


\begin{proof}
Let $\alpha_j$ be the roots of $P$ counted with multiplicities. 
We claim that $|\alpha_j|\leq R:=1+\sum_j {|a_j|}.$ Indeed if $|x|>R$
then since $R>1$ we get
$$  |P(x)|\geq |x|^D-\sum_{j=0}^{D-1} |a_j| |x|^j\geq 
|x|^{D-1}\left(|x|-\sum_{j=0}^{D-1} |a_j|\right)>0. $$
The result follows since $\prod_j |\alpha_j|=|a_0|.$
\end{proof}

Given a field $K$ which is an extension of $\ratls$ of degree $d$ let $\fH(L, N, p)$ be the set of all elements of $K$ of the
form $\frac{\beta}{N^p}$ where $\beta\in\bO_K$ and for each automorphism $\sigma_j$ of $K$ we have
$|\sigma_j(\beta)|\leq L.$

\begin{lemma}
\label{LmAuto}
If $0\neq \alpha\in \fH(L, N, p)$ then
$$ |\alpha|\geq \frac{1}{L^{d-1} N^p}. $$
\end{lemma}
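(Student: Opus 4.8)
The plan is to reduce the claim to a statement about algebraic integers and then exploit multiplicativity of the field norm. Write $\alpha=\beta/N^p$ with $0\neq\beta\in\bO_K$ and $|\sigma_j(\beta)|\leq L$ for every embedding $\sigma_j\colon K\hookrightarrow\cx$; since $|\alpha|=|\beta|/N^p$, it is enough to prove $|\beta|\geq L^{-(d-1)}$.

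First I would consider the product $M:=\prod_{j=1}^{d}\sigma_j(\beta)$, i.e. the norm $N_{K/\ratls}(\beta)$. The key observation is that $M$, up to sign, is the constant term of the characteristic polynomial of the $\ratls$-linear map ``multiplication by $\beta$'' on $K\cong\ratls^d$. Because $\beta$ is an algebraic integer, this characteristic polynomial is monic with coefficients in $\zed$ (it is a power of the minimal polynomial of $\beta$, which lies in $\zed[x]$ by Gauss's lemma); in particular $M\in\zed$. Since $\beta\neq 0$ we also have $M\neq 0$, hence $|M|\geq 1$.

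Next, order the embeddings so that $\sigma_1$ is the given inclusion $K\subset\cx$, so that $\sigma_1(\beta)=\beta$. Then
$$1\leq |M|=|\sigma_1(\beta)|\prod_{j=2}^{d}|\sigma_j(\beta)|\leq |\beta|\cdot L^{d-1},$$
which gives $|\beta|\geq L^{-(d-1)}$ and therefore $|\alpha|=|\beta|/N^p\geq 1/(L^{d-1}N^p)$, as claimed.

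The only point that genuinely requires care is the assertion that $M$ is a nonzero rational integer; this is standard (the norm of a nonzero algebraic integer is a nonzero rational integer), but I would spell out the one-line argument via the characteristic polynomial so that the degenerate case, in which $\beta$ generates a proper subfield of $K$, is handled uniformly. One could alternatively feed this characteristic polynomial into Lemma \ref{LmSmallestRoot}, but bounding its coefficients by the elementary symmetric functions of the $\sigma_j(\beta)$ only produces an estimate of the form $|\beta|\gtrsim L^{-d}2^{-d}$ or worse, so the direct norm computation is what yields the sharp exponent $d-1$.
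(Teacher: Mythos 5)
Your argument is correct and is essentially the paper's proof: the authors also deduce the bound from $\prod_{j=1}^{d}|\sigma_j(\beta)|\geq 1$ (the norm of a nonzero algebraic integer being a nonzero rational integer) together with $|\sigma_j(\beta)|\leq L$ for the conjugates other than $\beta$ itself. You merely spell out the standard justification of the norm fact, which the paper leaves implicit.
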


\begin{proof}
Indeed $|\beta| L^d\geq 1$ because $\displaystyle \prod_{j=1}^d |\sigma_j(\beta)|\geq 1. $
\end{proof}

Let $\fI(L, N, p, D)$ be the set of numbers which satisfy
$$ \alpha^{E}+a_{E-1} \alpha^{E-1}+\dots+a_0=0 $$
where $E\leq D$ and $a_j\in \fH(L,N,p).$

\begin{corollary}
\label{CrAlgAlg}
If $0\neq \alpha\in \fI(L, N, p, D)$ then 
$$ |\alpha|\geq \frac{1}{L^{d-1} N^p (DL+1)^{D-1}}. $$
\end{corollary}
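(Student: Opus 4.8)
The plan is to combine Lemma~\ref{LmSmallestRoot}, which bounds the smallest root of a monic polynomial in terms of its constant term and the sizes of its other coefficients, with Lemma~\ref{LmAuto}, which supplies precisely the lower bound on the constant term that is needed; the remaining coefficients are then bounded directly from the definition of $\fH(L,N,p)$.

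First I would reduce to the case in which the defining polynomial has nonzero constant term. Given $\alpha\neq 0$ with $\alpha^E+a_{E-1}\alpha^{E-1}+\dots+a_0=0$, $E\leq D$, and $a_j\in\fH(L,N,p)$, let $k$ be the smallest index $j\in\{0,\dots,E-1\}$ with $a_j\neq 0$; such an index exists because $\alpha\neq 0$ (otherwise the relation would read $\alpha^E=0$). Dividing the relation by $\alpha^k$ is legitimate since $\alpha\neq 0$ and produces a monic polynomial
\[
\alpha^{m}+a_{E-1}\alpha^{m-1}+\dots+a_{k+1}\alpha+a_k=0,\qquad m:=E-k\leq D,
\]
whose coefficients all lie in $\fH(L,N,p)$ and whose constant term $a_k$ is nonzero.

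Next I would estimate the coefficients. The lower bound on the constant term is exactly Lemma~\ref{LmAuto}: $|a_k|\geq 1/(L^{d-1}N^p)$. For upper bounds, writing $a_j=\beta_j/N^p$ we have $|\beta_j|\leq L$ by the defining condition of $\fH(L,N,p)$ and $N^p\geq 1$, hence $|a_j|\leq L$ for every coefficient. Feeding these into Lemma~\ref{LmSmallestRoot} for the degree-$m$ polynomial above gives
\[
|\alpha|\;\geq\;\frac{|a_k|}{\bigl(1+\sum_{j=k}^{E-1}|a_j|\bigr)^{m-1}}\;\geq\;\frac{1/(L^{d-1}N^p)}{(1+mL)^{m-1}}\;\geq\;\frac{1}{L^{d-1}N^p(DL+1)^{D-1}},
\]
the last inequality using $m\leq D$, so that $1+mL\leq 1+DL$ and $m-1\leq D-1$ (together with $1+DL\geq 1$). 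This is the assertion of Corollary~\ref{CrAlgAlg}.

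I do not expect a genuine obstacle here: the argument is bookkeeping. The one step warranting care is the reduction, where one must check that dividing out the top power of $\alpha$ leaves a monic polynomial of degree at most $D$ whose coefficients still lie in $\fH(L,N,p)$ and whose constant term is genuinely nonzero, so that both Lemma~\ref{LmSmallestRoot} and Lemma~\ref{LmAuto} apply verbatim.
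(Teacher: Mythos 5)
Your proposal is correct and follows essentially the same route as the paper: reduce to a nonzero constant term (the paper's ``after possibly reducing the degree''), bound that term below via Lemma~\ref{LmAuto}, bound the remaining coefficients by $L$, and apply Lemma~\ref{LmSmallestRoot}. You have merely spelled out the bookkeeping that the paper leaves implicit.
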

\begin{proof}
Since $\alpha\neq 0$ we can assume after possibly reducing the degree of the polynomial that
$a_0\neq 0.$ Then the result follows by combining Lemmas \ref{LmSmallestRoot} 
and \ref{LmAuto}
\end{proof}

\begin{prop} 
\label{PrSumAlg}
(see e.g. \cite[Section 5.8]{vW})
There exists constants $C$ and $q$ such that
if $\alpha_1, \alpha_2\in \fI(L, N, p, D)$ then
$\alpha_1+\alpha_2$ and $\alpha_1-\alpha_2$ are in
$\fI(CL^q, N, pq, D^2).$
\end{prop}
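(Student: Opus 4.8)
The plan is to realise $\alpha_1+\alpha_2$ as a root of the polynomial $\prod_{i,j}(y-\mu_i-\nu_j)$, where the $\mu_i$ and $\nu_j$ are the roots of monic polynomials witnessing $\alpha_1,\alpha_2\in\fI(L,N,p,D)$, and then to control the degree, the denominators and the archimedean sizes of its coefficients. First I would note that it suffices to treat sums: if $\alpha_2$ is a root of a monic $x^{E_2}+a_{E_2-1}x^{E_2-1}+\dots+a_0$ with all $a_j\in\fH(L,N,p)$, then $-\alpha_2$ is a root of $x^{E_2}-a_{E_2-1}x^{E_2-1}+\dots+(-1)^{E_2}a_0$, whose coefficients are again in $\fH(L,N,p)$ because that set is stable under $\beta\mapsto-\beta$; hence $-\alpha_2\in\fI(L,N,p,D)$, and the statement for $\alpha_1-\alpha_2$ follows from the statement applied to $\alpha_1$ and $-\alpha_2$.

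So fix monic $P_1,P_2\in K[x]$ of degrees $E_1,E_2\le D$ with coefficients in $\fH(L,N,p)$ and with $P_1(\alpha_1)=P_2(\alpha_2)=0$, let $\mu_1,\dots,\mu_{E_1}$ and $\nu_1,\dots,\nu_{E_2}$ be their roots in a fixed algebraic closure of $\ratls$, and set
$$
R(y):=\prod_{i=1}^{E_1}\prod_{j=1}^{E_2}\bigl(y-\mu_i-\nu_j\bigr).
$$
Then $R$ is monic of degree $E_1E_2\le D^2$ and has $\alpha_1+\alpha_2$ as a root. Moreover each coefficient of $R$ is symmetric in $(\mu_1,\dots,\mu_{E_1})$ and separately in $(\nu_1,\dots,\nu_{E_2})$, so by the fundamental theorem of symmetric functions (this is the input of \cite[Section 5.8]{vW}) it is a polynomial with coefficients in $\zed$ in the elementary symmetric functions of the $\mu$'s and of the $\nu$'s, i.e. in the coefficients of $P_1$ and $P_2$; in particular $R\in K[y]$, and explicitly the coefficient of $y^{E_1E_2-k}$ equals $\pm e_k\bigl(\{\mu_i+\nu_j\}_{i,j}\bigr)$.

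It then remains to put these coefficients into $\fH(CL^q,N,pq)$ for constants $C,q$ depending only on $D$ and $d$. For the denominators: $e_k(\{\mu_i+\nu_j\})$ is homogeneous of degree $k$ in the $\mu$'s and $\nu$'s jointly, so in its expression as a $\zed$-polynomial in the elementary symmetric functions of the $\mu$'s and $\nu$'s every monomial is a product of at most $k$ of these; each such factor is, up to sign, one of the coefficients $a^{(1)}_0,\dots,a^{(1)}_{E_1-1}$ of $P_1$ or $a^{(2)}_0,\dots,a^{(2)}_{E_2-1}$ of $P_2$, hence lies in $\fH(L,N,p)\subset N^{-p}\bO_K$; therefore the coefficient of $y^{E_1E_2-k}$ lies in $N^{-pk}\bO_K\subseteq N^{-pD^2}\bO_K$. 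Choosing $q:=D^2$ we can thus write every coefficient of $R$ as $\beta/N^{pq}$ with $\beta\in\bO_K$. For the sizes: if $\sigma$ is any embedding of $K$, then applying $\sigma$ to the coefficients of $P_1$ gives a monic polynomial whose coefficients have modulus $\le L/N^p\le L$, so by the root estimate in the proof of Lemma \ref{LmSmallestRoot} its roots $\sigma(\mu_1),\dots,\sigma(\mu_{E_1})$ have modulus $\le 1+DL$, and likewise for the $\sigma(\nu_j)$. Since $e_k(\{\mu_i+\nu_j\})$ is a $\zed$-polynomial in the coefficients of $P_1$ and $P_2$, we have $\sigma\bigl(e_k(\{\mu_i+\nu_j\})\bigr)=e_k\bigl(\{\sigma(\mu_i)+\sigma(\nu_j)\}\bigr)$, whence
$$
\Bigl|\sigma\bigl(e_k(\{\mu_i+\nu_j\})\bigr)\Bigr|\le\binom{D^2}{k}\bigl(2+2DL\bigr)^{k}\le C_1(D)\,L^{q}\qquad(L\ge1).
$$
Combining the two estimates bounds the numerators $\beta$ by $C\,L^{q}$, so all coefficients of $R$ lie in $\fH(CL^q,N,pq)$ and $\alpha_1+\alpha_2\in\fI(CL^q,N,pq,D^2)$; the case of $\alpha_1-\alpha_2$ then follows from the first paragraph.

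The genuine content is concentrated in the last paragraph — everything else is either the fundamental theorem of symmetric functions or the elementary root bound already isolated in Lemma \ref{LmSmallestRoot}. The step I expect to demand the most care is the interaction between those two estimates: because every element of $\fH(L,N,p)$ carries the same denominator exponent $p$, the coefficients of $R$ must be put over one common denominator, and one has to check that $N^{pq}$ with $q\sim D^2$ suffices (this is exactly what forces the exponent $p$ to be replaced by $pq$ in the conclusion) and that, after this padding, the numerators still obey a bound of the form $C\,L^{q}$ with $C,q$ depending only on $D$ and $d$; tracking the powers of $N$ produced by the padding is the one bookkeeping‑heavy point.
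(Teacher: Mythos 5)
Your reduction to sums, the construction $R(y)=\prod_{i,j}(y-\mu_i-\nu_j)$, the degree bound $E_1E_2\le D^2$, the isobaric-weight argument for the denominators and the root bound for the archimedean sizes are all the standard symmetric-function argument that the paper is pointing to by citing van der Waerden, so the route is the intended one. The gap is exactly at the step you flagged and then asserted without doing the bookkeeping: ``combining the two estimates bounds the numerators $\beta$ by $CL^q$.'' In the paper's definition of $\fH(L,N,p)$ the bound $L$ is imposed on the \emph{numerator} $\beta=N^{pq}c_k$, not on the element $c_k$; your archimedean estimate only gives $|\sigma(c_k)|\le C_1(D)L^{D^2}$, hence $|\sigma(\beta)|\le C_1(D)L^{D^2}N^{pq}$. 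More precisely, over its true denominator $N^{pk}$ the numerator of $c_k$ is only bounded by $C(D)\max(L,N^p)^k$ (each weight-$k$ monomial is $L^mN^{p(k-m)}$ after padding), and passing to the common denominator $N^{pq}$ multiplies this by a further $N^{p(q-k)}$; these powers of $N^p$ cannot be absorbed into $CL^q$. No sharper bookkeeping can rescue the exact claim, because the literal conclusion can fail: take $K=\ratls$ ($d=1$), $D=2$, $L=2$, $p=1$, $N$ a large prime, and $\alpha_1=N^{-1/2}$, $\alpha_2=\sqrt{2}\,N^{-1/2}$ (roots of $x^2-1/N$ and $x^2-2/N$, so both lie in $\fI(2,N,1,2)$). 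The minimal polynomial of $\alpha_1+\alpha_2$ is $x^4-(6/N)x^2+1/N^2$, of degree exactly $D^2=4$, so it is the only admissible monic polynomial of degree $\le D^2$; membership of $1/N^2$ in $\fH(C2^q,N,q)$ forces $q\ge 2$, and then the coefficient $-6/N=-6N^{q-1}/N^q$ has numerator $6N^{q-1}\ge 6N$, which exceeds $C\,2^q$ once $N$ is large, for any fixed constants $C,q$.

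What your argument does prove is the corrected statement in which $L$ is replaced by $LN^p$ (equivalently, numerators bounded by $C\max(L,N^p)^q$ with $C,q$ depending only on $D$ and $d$): $\alpha_1\pm\alpha_2\in\fI\bigl(C(LN^p)^q,N,pq,D^2\bigr)$. This is also all the paper needs: Corollary \ref{CrAlgDif} then carries extra powers of $N^{p}$, and in the proof of Theorem \ref{ThAlgGen} one has $N^p=N^{(n+1)m_2}$, which is exponential in the word length just as $L$ is, so \eqref{SeparationExp} still follows with slightly worse constants $\brC,\brc$. So either prove and use this weaker form, or change the definition of $\fH$ so that the bound $L$ is imposed on $|\sigma_j(\alpha)|$ itself rather than on the numerator (in which case your size estimate closes the argument, at the cost of adjusting Lemma \ref{LmAuto}); as written, the final sentence of your proof asserts a bound that does not follow and is in general false.
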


Combining Proposition \ref{PrSumAlg} with Corollary \ref{CrAlgAlg} we obtain

\begin{corollary}
\label{CrAlgDif}
If $\alpha_1, \alpha_2\in \fI(L, N, p, d)$ then either
$\alpha_1=\alpha_2$ or
$$ |\alpha_1-\alpha_2|\geq \frac{c}{L^{q(d-1)} N^{pq} L^{D^2}}. $$
\end{corollary}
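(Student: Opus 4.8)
The plan is to combine the two tools assembled above: Proposition \ref{PrSumAlg}, which controls how the parameters $(L,N,p,D)$ of the class $\fI$ transform under subtraction, and Corollary \ref{CrAlgAlg}, which gives a lower bound for the absolute value of a nonzero element of $\fI$. First I would apply Proposition \ref{PrSumAlg} to $\alpha_1$ and $\alpha_2$, both of which lie in $\fI(L,N,p,d)$. This places the difference $\alpha_1-\alpha_2$ in $\fI(CL^q, N, pq, d^2)$ for the absolute constants $C,q$ furnished by that proposition. (I note that the statement of the corollary writes $D^2$ in the final denominator; consistency with the hypothesis $\alpha_i \in \fI(L,N,p,d)$ means this should read $d^2$, or equivalently one sets $D=d$, so I would take $D=d$ throughout for clarity.)

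Next, assuming $\alpha_1 \neq \alpha_2$, the element $\alpha := \alpha_1 - \alpha_2$ is a nonzero member of $\fI(CL^q, N, pq, d^2)$, so Corollary \ref{CrAlgAlg} applies directly with the substitutions $L \mapsto CL^q$, $N\mapsto N$, $p \mapsto pq$, $D \mapsto d^2$. This yields
$$
|\alpha_1-\alpha_2| \;\geq\; \frac{1}{(CL^q)^{d-1}\, N^{pq}\, (d^2 \cdot CL^q + 1)^{d^2-1}}.
$$
The remaining work is purely cosmetic: I would absorb all the purely numerical factors — the powers of $C$, the factor $d^2$, the additive $1$, and the discrepancy between exponents like $d^2-1$ and $d^2$ — into a single constant $c$ (depending only on $C$, $q$, $d$, which are themselves absolute or fixed by the setup), and bound $(d^2 C L^q + 1)^{d^2-1}$ crudely from above by a constant times $L^{q d^2}$ (valid since we may assume $L \geq 1$, as otherwise the bound is trivial). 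This produces the claimed form
$$
|\alpha_1-\alpha_2| \;\geq\; \frac{c}{L^{q(d-1)}\, N^{pq}\, L^{d^2}}.
$$

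There is no real obstacle here — the corollary is a formal consequence of the two preceding results, and the only thing to watch is bookkeeping of which parameter feeds into which slot of $\fI$ and $\fH$ after the application of Proposition \ref{PrSumAlg}. The one genuinely substantive input, the stability of the class $\fI(L,N,p,D)$ under addition and subtraction with polynomially controlled growth of parameters, is exactly what Proposition \ref{PrSumAlg} provides (and is standard, as the citation to \cite[Section 5.8]{vW} indicates); everything downstream is elementary algebra with inequalities.
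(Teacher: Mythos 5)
Your proposal matches the paper's proof exactly: the paper obtains this corollary by the same one-line combination of Proposition \ref{PrSumAlg} (placing $\alpha_1-\alpha_2$ in $\fI(CL^q,N,pq,D^2)$) with Corollary \ref{CrAlgAlg} applied to the nonzero difference. The notational issues you flag (the $d$ versus $D$ in the fourth slot of $\fI$, and the fact that the crude estimate really yields $L^{qD^2}$ rather than $L^{D^2}$ in the denominator) are present in the paper's own statement as well and are harmless for the intended application, where only a lower bound exponential in a fixed polynomial of the parameters is needed.
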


\subsection{Manifolds with algebraic generators of $\pi_1$}
We now formulate the main result of this section.
\begin{theorem}
\label{ThAlgGen}
Let $\cM$ be a hyperbolic manifold such that the generators of $\pi_1(\cM)$ belong to
$PSO_{n,1}(\bar{\ratls}).$ Then \eqref{separation} holds.
\end{theorem}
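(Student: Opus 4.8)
The plan is to reduce the length spectrum of $\cM$ to a set of algebraic numbers of controlled complexity, and then apply Corollary \ref{CrAlgDif}. Recall that in the hyperbolic setting a closed geodesic $\gamma$ corresponds to a conjugacy class of a hyperbolic element $g\in\pi_1(\cM)\subset PSO_{n,1}(\reals)$, and its length is determined by the eigenvalues of $g$: writing $\lambda(g)>1$ for the largest eigenvalue of (a lift of) $g$, one has $\ell(\gamma)=\log\lambda(g)^2$ up to the standard normalization, and $\lambda(g)+\lambda(g)^{-1}$ (or $2\cosh(\ell(\gamma)/2)$ in dimension $2$, more generally a suitable trace-type invariant) is an algebraic number. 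So $e^{\ell(\gamma)/2}+e^{-\ell(\gamma)/2}$ is an algebraic number, and I want to show it lies in some $\fI(L,N,p,D)$ with parameters that grow at most exponentially in $\ell(\gamma)$.

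First I would fix a finite set of generators $g_1,\dots,g_k$ of $\pi_1(\cM)$ with entries in $\bar\ratls$, let $K$ be the number field they generate (so $[K:\ratls]=d<\infty$), and choose $N$ so that $N\cdot g_i$ has entries in $\bO_K$ for all $i$. Any $g\in\pi_1(\cM)$ representing a geodesic of length $\ell$ is a word of length $m$ in the $g_i^{\pm1}$, and since word length and geodesic length are comparably (linearly) related — by the Milnor–Švarc lemma applied to the action on $\hyp^n$, using that the $g_i$ move the basepoint a bounded amount — we have $m\leq c_1\ell+c_2$. Multiplying out, $N^m g$ has entries in $\bO_K$ whose archimedean absolute values, together with those of all their Galois conjugates, are bounded by $L:=C_0^m$ for a constant $C_0$ depending only on the chosen generators (crudely: each entry is a sum of at most $(\text{size})^m$ products of $m$ generator-entries, and the same bound holds after applying any $\sigma_j$ since conjugation permutes the conjugates of the $g_i$'s entries coherently). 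Hence each matrix entry of $g$ lies in $\fH(L,N,m)$ with $L,m=O(e^{O(\ell)})$.

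Next, the invariant $\mu:=\lambda(g)+\lambda(g)^{-1}$ satisfies the characteristic polynomial of $g$ (or a fixed factor of it of bounded degree), whose coefficients are the elementary symmetric functions of the entries of $g$, hence polynomials of bounded degree in those entries; so $\mu\in\fI(L',N,p',D)$ with $D=2(n+1)$ fixed, and $L'=O(e^{O(\ell)})$, $p'=O(e^{O(\ell)})$. Now given two geodesics with lengths $\ell_1\neq\ell_2$ and corresponding $\mu_1,\mu_2$, Corollary \ref{CrAlgDif} gives $|\mu_1-\mu_2|\geq c\big(L''^{q(d-1)}N^{p''q}L''^{D^2}\big)^{-1}$ with $L'',p''$ still exponential in $\max(\ell_1,\ell_2)$; since $x\mapsto x+x^{-1}$ is bi-Lipschitz on $[1,\infty)$ away from $1$ (and $e^{\ell_i/2}\geq e^{\ell_{\min}/2}$ is bounded below), this translates into $|e^{\ell_2/2}-e^{\ell_1/2}|\geq C'e^{-\beta\max(\ell_1,\ell_2)}$, and finally into $|\ell_2-\ell_1|\geq C e^{-\beta'\max(\ell_1,\ell_2)}$ after dividing by $e^{\max(\ell_1,\ell_2)/2}$ (absorbed into $\beta'$), which is exactly \eqref{separation}.

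The main obstacle — and the step that needs genuine care rather than bookkeeping — is proving the linear comparison $m\lesssim\ell$ between word length in the chosen generating set and geodesic length, and simultaneously tracking that \emph{all} archimedean valuations of the Galois conjugates (not just the identity embedding) stay exponentially bounded. The first is a quasi-isometry/Milnor–Švarc statement and is standard but must be invoked correctly (the geodesic realizing $\gamma$ has length at least a constant times the translation length, which is at least a constant times the stable word length); the second requires that the archimedean places of $K$ behave uniformly under the matrix multiplication, i.e. that a bound on $\max_j|\sigma_j(\text{entries of }g_i)|$ propagates through products with the same exponential constant — this is where the structure of $\fH(L,N,p)$ in Lemma \ref{LmAuto} is essential, and one should check that taking Galois conjugates commutes with the matrix arithmetic, which it does since $\sigma_j$ acts entrywise as a ring homomorphism. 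Everything else is a finite chain of the lemmas already in the excerpt.
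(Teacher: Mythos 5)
Your overall route is the paper's route: bound the heights and denominators of the matrix entries of a word of length $m$ (so the entries lie in $\fH(C_0^m,N,m)$, with Galois conjugates controlled because each $\sigma_j$ acts entrywise as a ring homomorphism), use the Milnor word-length/geodesic-length comparability to make $m=O(\ell)$, place the length-encoding algebraic number in some $\fI(L,N,p,D)$ with $L,p$ exponential in $\ell$ and $D$ bounded, apply Corollary \ref{CrAlgDif} to two such numbers with common parameters, and finally convert the resulting exponential separation back into \eqref{separation} by a derivative estimate. All of that matches the printed proof.

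However, one step is wrong as written, and it is precisely the step that links the length to an element of $\fI$: you assert that $\mu=\lambda(g)+\lambda(g)^{-1}$ ``satisfies the characteristic polynomial of $g$ (or a fixed factor of it of bounded degree).'' It does not: the roots of the characteristic polynomial of $g$ are the eigenvalues, namely $\lambda(g)^{\pm1}$ together with eigenvalues of modulus one, and $\mu>2$ is none of these, nor a root of any factor. The paper avoids this by working with the eigenvalue $\lambda=e^{\ell/2}$ itself, which genuinely is a root of the characteristic polynomial; the geometric input (quoted from Franchi--Le Jan) is that a loxodromic element of $PSO_{n,1}$ has exactly two positive real eigenvalues $e^{\pm\ell/2}$ and all remaining eigenvalues of modulus one, which is what identifies the length inside the spectrum, and the coefficients of the characteristic polynomial are sums of minors, hence lie in $\fH$ with parameters exponential in $m$. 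Your argument is repaired with no other changes either by replacing $\mu$ with $\lambda$ (and converting $|e^{\ell_1/2}-e^{\ell_2/2}|\geq Ce^{-\beta\max}$ into \eqref{separation} exactly as you do at the end), or by keeping $\mu$ but exhibiting it as an eigenvalue of $g+g^{-1}$ --- whose entries are again in $\fH$ with comparable parameters, since $g^{-1}=Jg^{\top}J$ in $O(n,1)$ --- or as a root of $\prod_i\bigl(x-\lambda_i-\lambda_i^{-1}\bigr)$, whose coefficients are integer polynomials in those of the characteristic polynomial because $\det g=1$. Relatedly, your hedge about a ``suitable trace-type invariant'' only works literally in dimension two, where $\tr B=2\cosh(\ell/2)$ (the paper's remark); in higher dimension the trace also carries the unit-modulus eigenvalues, which is exactly why the eigenvalue (or the corrected polynomial for $\mu$) must be used instead.
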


We remark that in dimension 2 groups 
satisfying the assumptions of
Theorem \ref{ThAlgGen} form a dense set in the corresponding
Teichmuller space $T_g$. This can be established, for example, by the arguments of Section~\ref{ScH2-KR}.

If $n\geq 3$ then \cite[Theorem 0.11]{GR} building on earlier results of of Selberg \cite{Selberg} and Mostow (\cite{Mostow}) shows
the conditions of Theorem \ref{ThAlgGen} are satisfied for all finite volume hyperbolic manifolds.
Hence we obtain
\begin{corollary}
\eqref{separation} holds for finite volume hyperbolic manifolds of dimension $n\geq 3.$
\end{corollary}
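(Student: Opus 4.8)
The plan is to reduce \eqref{separation} to the Diophantine estimate of Corollary \ref{CrAlgDif}. First I would fix matrix lifts $g_1,\dots,g_k\in O_{n,1}(\bar{\ratls})$ of the generators of $\pi_1(\cM)$; their entries generate a fixed number field $K$, say of degree $d=[K:\ratls]$, and after clearing denominators there are an integer $N$ and a constant $M_0$ so that every entry of every $g_i^{\pm1}$ lies in $\fH(M_0,N,1)$. A closed geodesic $\gamma$ of length $\ell$ corresponds to a conjugacy class in $\pi_1(\cM)$, and the \v{S}varc--Milnor lemma identifies word length with the displacement $d_{\hyp}(x_0,g\cdot x_0)$ up to a fixed affine change. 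Since $\gamma$ cannot escape to infinity faster than linearly in $\ell$ --- a bi-infinite geodesic cannot remain inside a Margulis horoball, so $\gamma$ meets a fixed compact part of $\cM$ and hence stays within distance $\ell/2$ of it --- the class of $\gamma$ has a representative $g$ that is a word of length $L\le A\ell+B$ in the $g_i$, with $A,B$ depending only on $\cM$.

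Next I would propagate the height estimates through the word and through the characteristic polynomial. Each matrix multiplication raises the denominator exponent by at most one and multiplies the size of the numerators (under every embedding of $K$) by at most a fixed constant, while products of algebraic integers remain algebraic integers; hence the entries of $g$ lie in $\fH(C^{L},N,L)$ for a suitable $C$, i.e. in $\fH(c_1e^{\beta_1\ell},N,A_1\ell+B_1)$. The coefficients of $\chi_g$ are, up to sign, elementary symmetric functions of the eigenvalues, i.e. sums of a bounded (in terms of $n$) number of products of at most $n+1$ entries of $g$, so they lie in $\fH(c_2e^{\beta_2\ell},N,A_2\ell+B_2)$. A loxodromic element of $O_{n,1}(\reals)$ with translation length $\ell$ acts as $\mathrm{diag}(e^{\ell},e^{-\ell})$ on the invariant timelike $2$-plane containing its axis and by an orthogonal transformation on the orthogonal complement, so $e^{\ell}$ is one of its eigenvalues and therefore a root of $\chi_g$. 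Consequently $e^{\ell}\in\fI(c_2e^{\beta_2\ell},N,A_2\ell+B_2,n+1)$.

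Now take $l_1\neq l_2$ in $\Lsp(\cM)$, set $T=\max(l_1,l_2)$ and $\alpha_i=e^{l_i}$. By the previous step, and enlarging a constant $\kappa$ so as to absorb the powers of $N$ needed to equalise the denominator exponents, both $\alpha_1$ and $\alpha_2$ lie in the common set $\fI(e^{\kappa T},N,\lceil\kappa T\rceil,n+1)$. Since $x\mapsto e^{x}$ is injective, $\alpha_1\neq\alpha_2$, so Corollary \ref{CrAlgDif} gives $|\alpha_1-\alpha_2|\ge c\,e^{-\beta'T}$ with $\beta'$ depending only on $\kappa,q,d,N$ and $n$. Finally, the mean value theorem gives $|\alpha_1-\alpha_2|=e^{\xi}|l_1-l_2|\le e^{T}|l_1-l_2|$ with $\xi$ between $l_1$ and $l_2$, whence $|l_1-l_2|\ge c\,e^{-(\beta'+1)T}$; the finitely many pairs with $T$ below a fixed threshold are trivially accommodated, and this is \eqref{separation}.

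The main obstacle is securing the \emph{linear} bound $L=O(\ell)$ on the word length: a merely polynomial bound would already be fatal, since the denominators $N^{L}$ enter Corollary \ref{CrAlgDif} through $N^{pq}$. In the closed case this is the standard quasi-isometry between $\pi_1(\cM)$ and $\hyp^n$, but in the finite-volume case one must rule out that a closed geodesic of length $\ell$ penetrates a cusp more than $O(\ell)$ deep, which is exactly the horoball remark above. Everything else is a routine bookkeeping of heights through matrix multiplication and through the coefficients of $\chi_g$, after which Lemmas \ref{LmSmallestRoot}--\ref{LmAuto}, Proposition \ref{PrSumAlg} and Corollary \ref{CrAlgDif} do the rest.
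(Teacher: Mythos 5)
There is a genuine gap, and it sits in your very first sentence: you ``fix matrix lifts $g_1,\dots,g_k\in O_{n,1}(\bar{\ratls})$ of the generators of $\pi_1(\cM)$,'' i.e.\ you assume that the lattice can be conjugated so that its generators have algebraic entries. That assumption is precisely the hypothesis of Theorem \ref{ThAlgGen}, and justifying it for \emph{every} finite volume hyperbolic manifold of dimension $n\geq 3$ is the entire content of the corollary: it is false for generic lattices when $n=2$ (most points of Teichm\"uller space are transcendental), and for $n\geq 3$ it is a deep rigidity statement — the paper's proof consists of quoting \cite[Theorem 0.11]{GR}, which builds on Selberg \cite{Selberg} and Mostow \cite{Mostow}, and then invoking Theorem \ref{ThAlgGen}. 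Your proposal never uses the hypotheses ``finite volume'' and ``$n\geq 3$'' to secure algebraicity; instead it re-derives the height-propagation argument (entries of a word of length $L$ lie in $\fH(C^L,N,L)$, characteristic polynomial coefficients, $e^{\ell}$ as an eigenvalue, then Corollary \ref{CrAlgDif} and the mean value theorem), which is essentially the paper's proof of Theorem \ref{ThAlgGen} itself. So the missing idea is the local-rigidity/arithmeticity input, without which nothing is proved beyond the theorem already in the paper.

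A secondary problem is the step you yourself flag as the main obstacle. From ``$\gamma$ stays within distance $\ell/2$ of a fixed compact part'' you conclude that the conjugacy class has a representative of word length $L\leq A\ell+B$. For non-uniform lattices this inference fails: the orbit map is not a quasi-isometry, and word length is not linearly bounded by displacement. Concretely, classes of the form $p^{k}h$ with $p$ a cusp parabolic are loxodromic with translation length growing like $\log k$, while their conjugacy-minimal word length grows linearly in $k$; such geodesics do stay within roughly $\ell/2$ of the thick part, yet their word length is exponential in $\ell$. Since the denominator exponent in Corollary \ref{CrAlgDif} enters through $N^{pq}$, an exponential word length would destroy the bound \eqref{separation}. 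The paper sidesteps this by reducing to Theorem \ref{ThAlgGen}, whose proof invokes Milnor's comparison \eqref{EqWL-GL} (a cocompact statement); if you want an argument that genuinely covers cusped manifolds you need a different device for controlling heights and denominators of the matrices representing long geodesics, not the horoball remark as stated.
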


The proof of Theorem \ref{ThAlgGen} is similar to the proof of 
Proposition 3 in \cite{GJS}, where it is shown that the
rotation matrices in ${\rm SU}(2)\cap M_2(\overline{\ratls})$
satisfy the {\em Diophantine condition} defined in \cite{GJS}.  
Related results for other Lie groups were established in \cite{ABRS,Br11,Varju}.  
Related questions were also discussed in \cite{Glu}.  

\subsection{Proof of Theorem \ref{ThAlgGen}}
\begin{proof}
Let $\gamma_1$ and $\gamma_2$ be two closed geodesics. Let $l_j$ be the length of $\gamma_j,$
$W_j$ be the word fixing $\gamma_j,$ $B_j$ be the matrix corresponding to $W_j,$
$m_j$ be the word length of $W_j$ and $r_j=l_j/2.$ To establish 
\eqref{separation} it suffices to show that
\begin{equation}
\label{SeparationExp}
\left| e^{r_1}-e^{r_2}\right| \geq \brC e^{-\brc \max(r_1, r_2)}. 
\end{equation}

Without a loss of generality we assume that $m_j\gg 1.$
By (\cite[Lemma 2]{Milnor}) we know that
\begin{equation}
\label{EqWL-GL}
\frac{l_j}{C} \leq m_j \leq C l_j 
\end{equation}
so \eqref{separation} if trivial unless $m_j$ and $m_2$ are comparable. Let us assume to fix our ideas
that $m_2\geq m_1.$ By assumption there is a finite extension $K$ of $\ratls$ and numbers 
$L$ and $N$ such that all entries of the generators belong to 
$\fH(L, N, 1).$ Accordingly the entries of $B_j$ belong to 
$$\fH((L(n+1))^{m_j}, N, m_j).$$
Closed geodesics on $\cM$ correspond to {\em loxodromic} elements of 
$\pi_1(\cM)\subset PSO_{n,1}$ (also called {\em boosts}) 
that that fix no points in $\hyp^n$ and fix two points in $\partial\hyp^n$.
It is shown in the proof of \cite[Thm. I.5.1]{Franchi} that $B_j$ has precisely two positive 
real eigenvalues $\alpha_{1,j}=e^{r_j}$ and $\alpha_{2,j}=e^{-r_j}$; all other eigenvalues of 
$B_j$ have modulus one. Since the coefficients of the characteristic polynomial of $B_j$
are the sums of minors we have
$$ e^{r_j}\in \fI((L(n+1))^{(n+1)m_j} (n+1)!, N, m_j (n+1)). $$
Reducing to the common denominator we see that both $e^{r_1}$ and $e^{r_2}$ belong to
$$ \fI((L(n+1))^{(n+1)m_2} N^{m_2-m_1} (n+1)!, N, m_2 (n+1)). $$
Now \eqref{SeparationExp} follows by Corollary \ref{CrAlgDif} and \eqref{EqWL-GL}.
\end{proof}

\begin{remark}
In dimension two the proof can be simplified slightly by remarking that 
$2\cosh(l_j/2)=\tr B_j\in K$.  
An alternative proof of Theorem \ref{ThAlgGen} could proceed by 
using explicit formulas for the lengths of closed geodesics on hyperbolic manifolds 
(see e.g. \cite[(3), p. 246]{Prasad:Rapinchuk}) and the estimates for linear forms 
in logarithms (see e.g. \cite[Chapter 2]{BW}).
The proof we give is more elementary, using only basic facts about 
algebraic numbers and matrix eigenvalues; and fairly concrete.  
\end{remark}



\section{Small gaps for surface of constant negative curvature.}
Let $$G_g=\{(A_1, \dots A_{2g})\in (SL_2(\reals))^{2g}: 
[A_1, A_2] [A_3, A_4]\dots [A_{2g-1}, A_{2g}]=I\}. $$
\begin{theorem}
\label{ThTop-HS}
The set of tuples $(A_1, A_2\dots A_{2g})\in G_g$
where \eqref{separation} fails is topologically generic.
\end{theorem}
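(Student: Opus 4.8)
The plan is to realise the failure set of \eqref{separation} as a residual set via the Baire category theorem. The geometric input is the prime geodesic theorem---which forces two closed geodesics to have super-exponentially close lengths---combined with an elementary perturbation that first upgrades such a near-coincidence to an exact coincidence of lengths and then reopens it by an arbitrarily small amount; it is the last, essentially free, step that turns the exponent $1$ coming from the counting into the arbitrary exponent needed to violate \eqref{separation}.

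Concretely, identify $\rho=(A_1,\dots,A_{2g})\in G_g$ with the corresponding representation, and for a word $w$ in the generators with $|\tr\rho(w)|>2$ set $l_\rho(w)=2\operatorname{arccosh}(|\tr\rho(w)|/2)$; on the open locus $T_g\subset G_g$ of discrete faithful tuples this is the length of the closed geodesic freely homotopic to $w$. I carry out the argument on $T_g$; on the other open pieces of $G_g$ the identical scheme works with $l_\rho$ given by this formula, the counting step in (a) below being replaced by the fact that, on any ball, the polynomials $\tr\rho(w)$ take exponentially many distinct values as $w$ ranges over words of bounded length. Now $G_g$ is a complete metric space, hence Baire, and \eqref{separation} fails for $\rho$ exactly when for every $n\in\naturals$ there are $l_1\ne l_2$ in ${\rm Lsp}$ with $|l_1-l_2|\le\tfrac1n e^{-n\max(l_1,l_2)}$; so it suffices to produce, for each $n$, a \emph{dense open} set $B_n$ on which this holds strictly, as the failure set then contains the dense $G_\delta$ set $\bigcap_n B_n$. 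Put
$$ B_n=\Big\{\rho:\ \exists\,w_1,w_2\ \text{with}\ |\tr\rho(w_i)|>2,\ 0<|l_\rho(w_1)-l_\rho(w_2)|<\tfrac1n e^{-n\max(l_\rho(w_1),l_\rho(w_2))}\Big\}. $$
Since each $\tr\rho(w)$ is a polynomial in the entries of $\rho$, $l_\rho(w)$ is continuous where defined and every clause defining $B_n$ is open; so $B_n$, a countable union of open sets, is open (and the two lengths in it are automatically distinct, so $w_1,w_2$ give distinct geodesics). Everything thus comes down to the density of each $B_n$.

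To prove $B_n$ dense, fix $\rho_0$, let $U$ be the $\eps$-ball about it, and argue in two steps. \emph{(a)} By the prime geodesic theorem $\rho_0$ has of order $e^{2T}/T$ primitive closed geodesics of length $\le 2T$; since these lengths lie in $[0,2T]$, for $T$ large the pigeonhole principle gives two of them, in classes $w_1\ne w_2^{\pm1}$, with $0\le|l_{\rho_0}(w_1)-l_{\rho_0}(w_2)|\le C\,T^2e^{-2T}$ (when it vanishes, $\rho_0$ is already a length coincidence). \emph{(b)} As $w_1\ne w_2^{\pm1}$, the real-analytic function $f:=l_{\cdot}(w_1)-l_{\cdot}(w_2)$ is not identically $0$ on $T_g$: at a surface with simple length spectrum---these exist, and are even generic \cite{Abr,Anosov-generic}---the equality $l_\rho(w_1)=l_\rho(w_2)$ would force $w_1\sim w_2^{\pm1}$. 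Hence $f$ is nonconstant on every nonempty open subset, its zero set $Z$ is a real-analytic hypersurface, and $Z$ lies within distance of order $|f(\rho_0)|/|\nabla f(\rho_0)|$ of $\rho_0$ (gradients in a fixed metric), which is far below $\eps$ once $T$ is large, \emph{provided} $|\nabla f(\rho_0)|=|\nabla l_{\rho_0}(w_1)-\nabla l_{\rho_0}(w_2)|$ is not super-exponentially small in $T$. Granting this, pick $\rho_*\in U\cap Z$. Finally $f$ is real-analytic, nonconstant and vanishes at $\rho_*$, while $\max(l_\rho(w_1),l_\rho(w_2))$ is bounded there, so every small enough ball around $\rho_*$, in particular one inside $U$, contains $\rho$ with $0<|f(\rho)|<\tfrac1n e^{-n\max(l_\rho(w_1),l_\rho(w_2))}$; such $\rho$ lies in $B_n\cap U$. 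This shows $B_n$ is dense, completing the proof.

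The step that must be nailed down---and which I expect to be the main obstacle---is the lower bound on $|\nabla l_{\rho_0}(w_1)-\nabla l_{\rho_0}(w_2)|$ for the pigeonhole pair, i.e.\ that two distinct primitive length functions are not nearly tangent at $\rho_0$. I would exploit the abundance of choices: the close pair can be taken among the roughly $e^{2T}$ candidates, and $T$ is at our disposal, which should rule out the exceptional nearly-tangent configurations. Quantitatively one can use Wolpert's formula $\partial l_\gamma/\partial(\text{twist along }\alpha)=\sum_{p\in\gamma\cap\alpha}\cos\theta_p$ together with the fact that for a long equidistributing geodesic $\gamma$ the Weil--Petersson gradient $\nabla l_\gamma$ has norm comparable to $l_\gamma$, so that two distinct such gradients cannot agree to within $e^{-T}$. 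Alternatively, the single perturbation in step (b) can be replaced by a two-stage one---first a small move of $\rho_0$ to kill any incipient tangency of $l(w_1)$ and $l(w_2)$, and only then the push of $f$ to zero---which still stays in $U$ because only the second, already tiny, displacement is governed by $|f(\rho_0)|$.
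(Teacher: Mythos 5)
Your Baire framework (open sets $B_n$, density to be proved) matches the paper's ``standard Baire category argument,'' but the density step --- which carries all the weight --- has a genuine gap that you yourself flag: the lower bound on $|\nabla l_{\rho_0}(w_1)-\nabla l_{\rho_0}(w_2)|$ for the pigeonhole pair is never established, and without it the argument does not close. Note that the pigeonhole bound $CT^2e^{-2T}$ is far weaker than what membership in $B_n$ requires (roughly $e^{-2nT}$), so everything hinges on being able to push $f=l_\cdot(w_1)-l_\cdot(w_2)$ to zero by a perturbation of size $\ll\eps$; that needs quantitative non-degeneracy of $f$ near $\rho_0$, not just $f\not\equiv 0$ (mere analyticity gives a \L ojasiewicz-type distance bound with an uncontrolled exponent, and the first-order estimate $|f|/|\nabla f|$ additionally needs gradient control along the whole path, not only at $\rho_0$). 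Your proposed fixes do not obviously repair this: the pair $(w_1,w_2)$ is forced on you by pigeonhole, and the equidistribution heuristic cuts the wrong way --- if both long geodesics equidistribute, then $\nabla l_{w_1}$ and $\nabla l_{w_2}$ are nearly parallel and of nearly equal magnitude, which is exactly the near-tangency scenario you need to exclude; and the two-stage perturbation still requires a quantitative statement about how large a gradient difference can be created, and how little $f$ moves, under a displacement of size $\ll\eps$, which is again the same missing estimate.

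The paper avoids this issue entirely by decoupling the two lengths being compared. It takes the set $\LL$ of lengths of geodesics $\gamma_A$ with $A\in\langle A_1,A_2\rangle$; for a dense set of tuples the lengths of $\gamma_{A_1^kA_2^m}$ are asymptotically $\kappa+k\lambda_1+m\lambda_2$ with $\lambda_1,\lambda_2$ non-commensurable, so $\LL$ is $\delta$-dense in $[L,\infty)$. It then tunes the single geodesic $\brgamma=\gamma_{A_3A_1^n}$ by perturbing \emph{only} $A_3$ (via a unipotent factor $A_3(\eta)$, with $\tr(A_3(\eta)A_1^n)=\tr(A_3A_1^n)+\eta c$ and $c$ comparable to the trace). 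The crucial point is that this perturbation leaves every length in $\LL$ untouched while moving $L_{\gamma_{A_3(\eta)A_1^n}}$ continuously through an interval of width $\gtrsim\delta$; the intermediate value theorem then produces an arbitrarily small (indeed arbitrarily prescribed) nonzero gap using a perturbation whose size depends only on $\delta$, not on how small the gap must be. This one-parameter, frozen-target argument is the idea your proposal is missing, and it is what makes the density of $B_n$ provable without any lower bounds on gradients of length functions.
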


\begin{proof}
Let $\gamma_A$ denote the closed geodesic whose lift to the fundamental cover joins $q$ and $A q.$ 
Let $\LL$ denote the length spectrum of the geodesics $\gamma_A$ where $A$ belongs to a subgroup
generated by $A_1$ and $A_2.$ Note that for a dense set of tuples it holds that for each $\delta$
there exists $L$ such that for $l>L$ the set $[l, l+\delta]$ intersects $\LL.$ 
One way to see this is to consider the
geodesics $\gamma_{A_1^k A_2^m}.$ Their length have asymptotics
$$ \kappa(A_1, A_2) k \lambda_1+m \lambda_2 $$
where $e^{\lambda_j}$ is the leading eigenvalue of $A_j.$ 
Note that for typical $A_1, A_2$ we have $\kappa(A_1, A_2)\neq 0$ and $\lambda_1$ and $\lambda_2$ 
 are non commensurable. Consider a geodesic $\brgamma=\gamma_{A_3 A_1^n}$ where 
$n$ is very large. By the foregoing discussion there exists $l\in \LL$ such that 
$|l-L_\brgamma|<\delta.$ Now consider the perturbations of $A_3$ of the form
$ A_3(\eta)=\left(\begin{array}{cc} 1 & \eta \\
                                                                  0 & 1 
                                      \end{array}\right) A_3 . $    
Assume that 
$A_3 A_1^n=\left(\begin{array}{cc} a & b \\
                                                                  c & d 
                                      \end{array}\right)  .$  
After applying a small perturbation if necessary we can assume that all entries of this matrix have the same                                                               
order as its trace. Then
$$ \tr(A_3(\eta) A_1^n)=\tr(A_3 A_1^n)+\eta c, $$
so by a small perturbation we can make $L_{\gamma_{A_3(\eta) A_1^n}}$ as close to
$l$ as we wish. Now the result follows by a standard Baire category argument (cf. Section \ref{SSVarCurvSG}). 
\end{proof}


\section{Constructing metrics with small gaps in the length spectrum}
\label{SSVarCurvSG}
This section is devoted to the proof of the following fact.
\begin{theorem}\label{thm:smallgapneg}
For any $r>3$ for any negatively-curved $C^r$ metric $g$, 
for any function $F(t)$ (which we assume is monotone and
fast decreasing), and a number $\delta>0$, there exists a metric
$\brg$, such that $||\brg-g||_{C^r}<\delta$ 
and there exists an infinite sequence of pairs of
closed $\tg$-geodesics $\gamma_{1,j},\gamma_{2,j}$ with
$L_\brg(\gamma_{i,j})\to\infty$ as $j\to\infty$, and
\begin{equation}\label{smallgap}
|L_\brg(\gamma_{1,j})-L_\brg(\gamma_{2,j})|<\min
\{F(L_\brg(\gamma_{1,j})),F(L_\brg(\gamma_{2,j}))\}.
\end{equation}
\end{theorem}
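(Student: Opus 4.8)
The plan is to build $\bar g$ by a local perturbation of $g$ supported near one short closed geodesic, arranged so that countably many pairs of long geodesics become ever closer in length. First I would fix a short closed $g$-geodesic $\sigma_0$ and an Anosov-type structural-stability framework: since $g$ is negatively curved and $r>3$, all $C^r$-small perturbations remain Anosov, closed geodesics persist and depend continuously (indeed $C^1$) on the metric, and the length $L_{\bar g}(\gamma)$ of the continuation of a fixed free-homotopy class $\gamma$ is a $C^1$ function of the perturbation parameter. The key mechanism is: inside a small tubular neighbourhood $U$ of $\sigma_0$, a one-parameter family $g_\eta$ of metrics with $\|g_\eta - g\|_{C^r}\le\delta$ will move $L_{g_\eta}(\gamma)$ at a rate proportional to the number of times $\gamma$ traverses $U$, so that two classes $\gamma_1,\gamma_2$ that wind very differently around $\sigma_0$ have length-difference $L_{g_\eta}(\gamma_1)-L_{g_\eta}(\gamma_2)$ that sweeps through an interval of definite size as $\eta$ ranges over $[0,\delta']$; by the intermediate value theorem one can solve for an $\eta$ making this difference exactly $0$, or as small as desired.

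Next I would set up the Baire category machinery, exactly as alluded to in Section~\ref{SSVarCurvSG} and used in Theorem~\ref{ThTop-HS}. For each $j$ and each threshold, consider the set $\cE_j$ of metrics $\bar g$ (within the $C^r$-ball of radius $\delta$ around $g$) admitting a pair of closed geodesics $\gamma_{1},\gamma_{2}$ with $\min\{L_{\bar g}(\gamma_1),L_{\bar g}(\gamma_2)\}>j$ and $|L_{\bar g}(\gamma_1)-L_{\bar g}(\gamma_2)|<F(L_{\bar g}(\gamma_1))$. Each $\cE_j$ is open (persistence of geodesics and continuity of lengths under $C^r$-perturbation) and the mechanism of the previous paragraph shows $\cE_j$ is dense: given any $\bar g$ and any neighbourhood, pick a pair of classes winding differently around a short geodesic in the support region, and perturb within the neighbourhood to force $|L(\gamma_1)-L(\gamma_2)|$ below the required (tiny, since $F$ is fast decreasing) value. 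Then $\bigcap_j \cE_j$ is a dense $G_\delta$ in the complete metric space of $C^r$-metrics near $g$, hence nonempty; any $\bar g$ in it has the desired infinite sequence of pairs with $L_{\bar g}(\gamma_{i,j})\to\infty$ and \eqref{smallgap}.

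The main obstacle I expect is the quantitative control of the derivative $\frac{d}{d\eta}L_{g_\eta}(\gamma)$ and, relatedly, ensuring the two chosen classes $\gamma_1,\gamma_2$ respond to the perturbation at genuinely different rates while their unperturbed lengths are already close. One clean way is to use the first-variation formula: $\frac{d}{d\eta}L_{g_\eta}(\gamma) = \frac12\int_{\gamma}\dot g_\eta(\dot\gamma,\dot\gamma)\,dt / \|\dot\gamma\|$, so a perturbation $\dot g_\eta$ localized in $U$ contributes proportionally to the total time $\gamma$ spends in $U$. Thus I would choose $\gamma_1 = \gamma_{A}$ and $\gamma_2 = \gamma_{A B^n}$ (in terms of generators of $\pi_1$, with $B$ the class of the short geodesic $\sigma_0$): the second spends order-$n$ time in $U$ while the first does not, their geometric lengths differ by $O(1)$ up to the $n$ winding contributions, and by adjusting the perturbation strength within the $\delta$-ball one drives the length gap through zero. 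The remaining routine checks — that $n$ can be taken large enough that both lengths exceed $j$, that one stays inside the $C^r$-ball, and that the openness and density claims hold uniformly — are standard once the first-variation estimate is in place; so I would present the variational computation and the winding construction carefully and treat the Baire step as a citation to the argument of Theorem~\ref{ThTop-HS}.
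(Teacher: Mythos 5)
Your overall architecture (a localized conformal perturbation, an intermediate-value argument in the perturbation parameter, and a Baire/iteration step) is the same as the paper's, but the density step has a genuine gap. For the intermediate value theorem to drive $L_{g_\eta}(\gamma_2)-L_{g_\eta}(\gamma_1)$ to zero (or below the fast-decreasing $F$), the \emph{unperturbed} difference must already lie inside the interval swept as $\eta$ runs over $[0,\delta']$. A perturbation of size $\delta$ changes the length of any closed geodesic only by a multiplicative factor $1+O(\delta)$ (this is exactly Lemma \ref{LmComp}), so for your pair the swept interval has length $O(\delta\, n\,\ell(\sigma_0))$, while the unperturbed gap between $\gamma_A$ and $\gamma_{AB^n}$ is of order $n\,\ell(\sigma_0)+O(1)$. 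For $\delta<1$ the gap therefore never crosses zero, let alone drops below $F$ of a large length; the winding pair $(\gamma_A,\gamma_{AB^n})$ simply cannot work. What your plan is missing is the key existence statement: pairs of arbitrarily long closed geodesics whose lengths are \emph{already} closer than the sweepable range, yet which respond to the localized perturbation at genuinely different rates. Nothing in your proposal (neither the first-variation formula nor structural stability) produces such pairs.

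This is precisely what the paper's Lemmas \ref{LmAvoid} and \ref{LmMor} supply, and neither is routine. Lemma \ref{LmAvoid} gives, for a small ball $B(q_0,\eps)$, arbitrarily long closed geodesics $\gamma_1$ avoiding the ball entirely, so $\gamma_1$ stays a geodesic of the perturbed metric with unchanged length; its proof needs expansion of unstable curves together with the Anosov closing lemma, not just genericity. Lemma \ref{LmMor} (equidistribution with respect to the Bowen--Margulis measure) then yields a closed geodesic $\gamma_2$ with $L_g(\gamma_1)<L_g(\gamma_2)<L_g(\gamma_1)+\bar\delta$ for an arbitrarily small $\bar\delta$, which moreover spends a definite proportion $\mu(B(q_0,\eps/2))/2$ of its length in $B(q_0,\eps/2)$. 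Taking $\tg^\eta=(1-\eta z)g$ with $z$ a bump function on the ball, the required parameter is $\eta\sim\bar\delta/\bigl(L\,\mu(B(q_0,\eps/2))\bigr)$, so choosing $\bar\delta$ small keeps $\|g-\tg^\eta\|_{C^r}=O(\eta\eps^{-r})<\delta$ while the gap sweeps through zero; this is the content of Lemma \ref{LmEqLen}. If you replace your winding pair by such a pair (or prove an analogous ``small initial gap plus different responsiveness'' statement, e.g.\ by pigeonhole combined with a quantitative bound on the time long geodesics spend in the perturbation region), then the rest of your outline --- first variation, openness of the sets $\cE_j$, and the Baire intersection, with Lemma \ref{LmComp} controlling the accumulated length changes across successive perturbations --- goes through essentially as in the paper.
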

This shows that, in general, one {\em cannot} obtain good lower
bounds for gaps in the length spectrum for 
a $C^r$ open set of negatively curved metrics.

Theorem \ref{thm:smallgapneg} follows from the lemma below by a standard Baire category argument.

\begin{lemma}
\label{LmEqLen}
Given a metric $g$ and numbers $L$ and $\delta$ there is a metric
$\tg$ such that $||g-\tg||_{C^r}\leq \delta$ and there are two $\tg$-geodesics $\gamma_1$ and $\gamma_2$ such that
$$L_\tg(\gamma_1)=L_\tg(\gamma_2)>L.$$
\end{lemma}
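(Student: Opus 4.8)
The plan is to exhibit, inside a small $C^r$-ball around $g$, a perturbed metric $\tg$ possessing an isometric involution (or at least a symmetry swapping two geodesic segments), so that two distinct closed geodesics are forced to have exactly equal length. First I would pick a closed geodesic $\gamma_0$ of $g$ with $L_g(\gamma_0) > L$ (these exist in abundance by exponential growth of the length spectrum in negative curvature) and, more usefully, a primitive closed geodesic $\gamma_0$ that passes through a point $p$ at which it has a self-transversal intersection, or simply a long geodesic arc with both endpoints near a common small ball $B$. The key idea: in a small ball $B$ disjoint from the rest of $\gamma_0$, deform $g$ to a metric that is rotationally symmetric (invariant under a reflection) inside a slightly smaller ball, interpolating back to $g$ near $\partial B$ so that the $C^r$-norm of the change is $\leq \delta$; this is possible since we only need a localized, arbitrarily flat bump, and $r$ is fixed.

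The second step is to arrange the combinatorics so that the reflection symmetry inside $B$ sends one closed geodesic to another. Concretely, I would engineer two geodesic arcs entering $B$, routed (by the choice of $\gamma_0$ and a preliminary small perturbation to put things in general position) so that the reflection of $B$ maps the entrance/exit directions of the first arc to those of the second. Then the reflection maps the closed geodesic $\gamma_1$ obtained by closing up arc one through $B$ to the closed geodesic $\gamma_2$ obtained by closing up arc two, and an isometry preserves length, giving $L_\tg(\gamma_1) = L_\tg(\gamma_2)$. One must check $\gamma_1 \neq \gamma_2$ as subsets of $\cM$ (arrange the arcs to lie in different free homotopy classes, e.g. using distinct generators of $\pi_1$), and that both geodesics survive the perturbation and still have length $> L$ — the latter is immediate since the arcs were chosen long and the perturbation is $C^0$-small, so lengths change by a controlled amount.

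An alternative, cleaner route avoids global symmetry: use the structure of geodesics as critical points of the length functional. Fix a free homotopy class $c$ and consider the $\tg$-geodesic $\gamma(\tg)$ in that class; as $\tg$ varies in the $\delta$-ball, $L_\tg(\gamma(\tg))$ varies continuously (indeed smoothly, by persistence of hyperbolic closed orbits for negatively curved metrics, $r>3$ giving enough regularity). Pick two classes $c_1, c_2$ with $L_g(\gamma(c_i)) > L$ and $L_g(\gamma(c_1))$ close to $L_g(\gamma(c_2))$; now perturb $g$ inside a small ball met by $\gamma(c_1)$ but not by $\gamma(c_2)$, which moves $L(\gamma(c_1))$ while leaving $L(\gamma(c_2))$ fixed, and choose the perturbation parameter by the intermediate value theorem to make the two lengths equal. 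The disjointness of $\gamma(c_1)$ from $\gamma(c_2)$ can always be achieved after an initial generic perturbation, and the dependence of $L(\gamma(c_1))$ on the bump parameter is genuinely non-constant (e.g. a $C^0$-small but suitably chosen metric change along an arc of $\gamma(c_1)$ strictly increases or decreases its length, by the first variation formula). The main obstacle, in either approach, is the bookkeeping ensuring the two geodesics remain genuinely distinct and that the perturbation stays within the $C^r$-ball while still effecting a nonzero (hence, by continuity, sign-changing) change in one length — this is where one spends the real work, but it is routine deformation theory once the geometric picture is fixed.
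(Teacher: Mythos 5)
Your first route (forcing equal lengths via a reflection symmetry installed in a small ball $B$) does not work as described: the perturbed metric is only symmetric \emph{inside} $B$, so the reflection is not an isometry of the manifold, and it cannot carry one closed geodesic to another — both geodesics spend most of their time outside $B$, where no symmetry is available. To exchange two long closed geodesics you would need a global isometry, which cannot be produced by a $C^r$-small, locally supported perturbation; arranging the two external arcs to be "mirror images" of each other is exactly as hard as the original problem.

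Your second route is, in outline, the paper's actual proof: take a conformal perturbation $\tg^\eta=(1-\eta z)g$ with $z$ a bump supported in a ball met by one geodesic and missed by the other, so that one length is frozen while the other varies continuously in $\eta$, and finish with the intermediate value theorem. The genuine gap is the quantitative step you wave off as routine bookkeeping. To equalize the lengths you must close the initial gap $\brdelta=|L_g(\gamma(c_1))-L_g(\gamma(c_2))|$ by a perturbation with $\|\tg^\eta-g\|_{C^r}=O(\eta\eps^{-r})\leq\delta$, which changes the moving length only by about $\eta\times(\text{time the geodesic spends in the ball})$; hence you need $\brdelta$ small compared with $\delta\,\eps^{r}\times(\text{time in ball})$, while $\eps$ (how far the chosen point of $\gamma(c_1)$ sits from $\gamma(c_2)$) is only determined \emph{after} the pair is chosen. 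Nothing in your argument excludes that every pair with small length gap consists of geodesics shadowing each other so closely that the available ball, and hence the achievable length change within the $C^r$ budget, is too small — a circularity you never break. The paper breaks it with two dynamical inputs: Lemma \ref{LmAvoid} (for a ball $B(q_0,\eps)$ fixed in advance there exist arbitrarily long closed geodesics avoiding it; this itself needs the unstable-cone construction and the Anosov closing lemma), and the equidistribution Lemma \ref{LmMor} of Parry--Pollicott, which produces a second closed geodesic with length within a prescribed $\brdelta$ of the first \emph{and} spending time proportional to its length in $B(q_0,\eps/2)$, so that $\eta\sim\brdelta/(L\,\mu(B(q_0,\eps/2)))$ already suffices and stays within the $C^r$ budget. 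Some such dynamical ingredient (equidistribution, or topological mixing plus the closing lemma) is indispensable to produce a pair with the right geometry; without it the intermediate value argument has nothing to apply to.
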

We also need the following fact
\begin{lemma}
\label{LmComp}
Let $g$ and $\tg$ be two negatively curved metrics such that $||g-\tg||_\infty\leq \delta$ and 
$\gamma$ and $\tgamma$ be two closed geodesics for $g$ and $\tg$ respectively of lengths 
$L$ and $\tL$. If $\gamma$ and $\tgamma$ are homotopic then
$$ \frac{L}{1+\delta}\leq\tL\leq L(1+\delta). $$
\end{lemma}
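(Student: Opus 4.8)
The plan is to combine a pointwise comparison of the two metrics with the basic fact that in negative curvature a closed geodesic is the shortest loop in its free homotopy class. First I would extract from $\|g-\tg\|_\infty\le\delta$ the length comparison: for every closed curve $c$ one has $L_\tg(c)\le(1+\delta)L_g(c)$ and, symmetrically, $L_g(c)\le(1+\delta)L_\tg(c)$. This follows by integrating along $c$ the pointwise inequality $\sqrt{\tg(\dot c,\dot c)}\le(1+\delta)\sqrt{g(\dot c,\dot c)}$ and its counterpart; that pointwise bound is precisely what $C^0$-closeness of the metrics asserts (as quadratic forms, $(1+\delta)^{-1}g\le\tg\le(1+\delta)g$), up to a harmless shrinking of $\delta$ to absorb the square root.

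Next I would invoke the standard consequence of negative curvature: on our (complete, negatively curved) manifold each nontrivial free homotopy class of loops contains a unique closed geodesic, and that geodesic realizes the infimum of lengths over the class. Concretely, a free homotopy class corresponds to a conjugacy class in $\pi_1$, hence to a deck transformation $T$ of the universal cover; the closed geodesic is the projection of the axis of $T$ and its length equals the translation length $\inf_x d(x,Tx)$, attained on the axis. Any loop in the class lifts to a path from some $x$ to $Tx$, so has length at least $d(x,Tx)$, hence at least that of the closed geodesic. The same statement applies verbatim to both $g$ and $\tg$.

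Finally I would combine the two. Since $\gamma$ and $\tgamma$ are (freely) homotopic, $\gamma$ is a loop in the free homotopy class of the $\tg$-geodesic $\tgamma$, so by the minimizing property followed by the length comparison
\[
\tL \;=\; L_\tg(\tgamma)\;\le\; L_\tg(\gamma)\;\le\;(1+\delta)L_g(\gamma)\;=\;(1+\delta)L .
\]
Symmetrically $\tgamma$ is a loop in the free homotopy class of the $g$-geodesic $\gamma$, so $L=L_g(\gamma)\le L_g(\tgamma)\le(1+\delta)L_\tg(\tgamma)=(1+\delta)\tL$, i.e.\ $\tL\ge L/(1+\delta)$, which is the asserted double inequality.

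The argument is essentially soft, so I do not expect a real obstacle; the two points deserving mild care are fixing the normalization of $\|\cdot\|_\infty$ on metrics so that the constant is exactly $1+\delta$ (always arrangeable by shrinking $\delta$), and noting that the length-minimizing property of closed geodesics --- the heart of the proof --- genuinely requires negative (or at least nonpositive) curvature of \emph{both} $g$ and $\tg$.
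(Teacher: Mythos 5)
Your argument is correct and is essentially the paper's own proof: both rest on the pointwise comparison $L_\tg(c)\le(1+\delta)L_g(c)$ for curves plus the fact that in negative curvature the closed geodesic is the unique length-minimizer in its free homotopy class, with the reverse inequality obtained by swapping the roles of $g$ and $\tg$. Your lift-to-the-universal-cover justification of the minimizing property is just a fuller spelling-out of the standard fact the paper quotes.
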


\begin{proof}
Recall that for negatively curved there exists a unique geodesic in each homotopy class and this geodesic is length
minimizing. The second inequality follows since the length of $\gamma$ with respect
to $\tg$ is at most $L(1+\delta)$ and $\tgamma$ is shorter. The first inequality follows from the second by interchanging 
the roles of $g$ and $\tg.$
\end{proof}

\begin{proof}[Proof of Theorem \ref{thm:smallgapneg}]
We claim that given metric $g$ and numbers $k\in \naturals$ and $\delta>0$ there exists a metric $\brg$ such that
$||\brg-g||_{C^r}<\delta$ and for each $j=1\dots k$ there are geodesics $\gamma_{1,j}, \gamma_{2,j}$ such that
\begin{equation}
\label{EqKGaps}
 L_\brg(\gamma_{i,j})>j, \quad |L_\brg(\gamma_{1,j})-L_\brg(\gamma_{2,j})|\leq 
F(\max(L_\brg(\gamma_{1,j}), L_\brg(\gamma_{2,j}))). 
\end{equation}
It follows that the space of metrics satisfying \eqref{smallgap} is topologically generic and hence dense.

It remains to construct $\brg$ satisfying \eqref{EqKGaps}. 

By Lemma \ref{LmEqLen} we can find $g_1$ such that $||g-g_1||_{C^r}<\frac{\delta}{2}$
and there are two geodesics $\gamma_{1,1}$ and $\gamma_{2,1}$ such that
$$  L_{g_1}(\gamma_{i,1})>1 \text{ and } L_{g_1}(\gamma_{1,j})=L_{g_1}(\gamma_{2,j}). $$
For $j\geq 1$ we apply Lemma \ref{LmEqLen} to find $g_j$ such that
$$ ||g_j-g_{j-1}||_{C^r} \leq\min\left(\frac{\delta}{2^j}, \min_{l=1}^{j-1} 
\frac{F(L_{g_l}(\gamma_{1,l}))+1)}{L_{g_l}(\gamma_{1,l})  2^{j-l+1}}\right)  $$
and there are two geodesics $\gamma_{1,j}, \gamma_{2,j}$ such that
$$ L_{g_j}(\gamma_{1,j})=L_{g_j}(\gamma_{2,j})>j. $$
Then $g_k$ satisfies the required properties since, by Lemma \ref{LmComp}, 
the lengths of $g_{i,l}$ have changed by less than
$F(L_{g_l}(\gamma_{1, l})+1)/2$ in the process of making consecutive inductive steps.
\end{proof}
\begin{remark}
In particular if we continue the above procedure for the infinite number of steps then the limiting metric
will satisfy the conditions of Theorem \ref{thm:smallgapneg}.
\end{remark}

The proof of Lemma \ref{LmEqLen} relies on two facts. 
If $\gamma$ is a closed geodesic let $\nu_\gamma$ denote the invariant measure for the geodesic flow supported on
$\gamma.$ Let $h$ denote the topological entropy of the geodesic flow. Let $\mu$ denote the Bowen-Margulis
measure. Recall \cite{PP} that $\mu$  the measure of maximal entropy for the geodesic flow. It has a full support
in the unit tangent bundle $SM.$ 

\begin{lemma}\cite[Theorem 6.9 and Proposition 7.2]{PP}
\label{LmMor}
$ Lh e^{-Lh} \sum_{L(\gamma)\leq L} \nu_\gamma $ converges as $L\to\infty$ to $\mu.$
\end{lemma}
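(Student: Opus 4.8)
The plan is to deduce the equidistribution from the thermodynamic formalism of the geodesic flow: realize the Bowen--Margulis measure $\mu$ as the suspension of an equilibrium state, count closed orbits weighted against a test function, and extract the normalization (with its factor $Lh$) from a Tauberian argument.

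First I would set up a symbolic model. Since $M$ is closed and negatively curved, the geodesic flow $(g^t)$ on $SM$ is Anosov, so Bowen's construction presents it --- off a set of measure zero and up to finitely many identifications along the boundary of a Markov section --- as a suspension of a topologically mixing subshift of finite type $\sigma\colon\Sigma\to\Sigma$ under a strictly positive H\"older roof function $r$. Under this dictionary a closed geodesic $\gamma$ of length $L(\gamma)$ corresponds to a periodic $\sigma$-orbit whose $r$-Birkhoff sum equals $L(\gamma)$, and $\nu_\gamma$ pushes forward to the flow-invariant probability measure carried by that orbit. The topological entropy $h$ is then the unique zero of $s\mapsto P_\sigma(-sr)$, where $P_\sigma$ is the pressure for $\sigma$, and the (unique, Gibbs) equilibrium state $m$ of $-hr$ has the property that its suspension, normalized to a probability measure, is the measure of maximal entropy; that is, the suspension of $m$ is exactly $\mu$.

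Next I would run the counting. For a H\"older function $\phi$ on $SM$ set
\[
N_L(\phi)=\sum_{L(\gamma)\le L} L(\gamma)\,\nu_\gamma(\phi),
\]
which orbit by orbit is the integral of $\phi$ along $\gamma$. Symbolically this is a sum over periodic points weighted by Birkhoff sums of a H\"older function built from $r$ and $\phi$, an object governed by the dynamical zeta function assembled from the twisted transfer operators $\mathcal{L}_{-(h+s)r}$: since $P_\sigma(-hr)=0$, the Ruelle--Perron--Frobenius theorem gives $\mathcal{L}_{-hr}$ a simple leading eigenvalue $1$ whose eigenfunction and conformal measure combine to $m$, and a Wiener--Ikehara Tauberian argument yields
\[
N_L(\phi)\sim\frac{\mu(\phi)}{h}\,e^{hL},\qquad L\to\infty,
\]
with the same constant for every $\phi$ (the case $\phi\equiv1$ recovers Margulis' prime geodesic theorem $\#\{L(\gamma)\le L\}\sim e^{hL}/(hL)$). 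A summation by parts then removes the length weight: $S_L(\phi):=\sum_{L(\gamma)\le L}\nu_\gamma(\phi)$ satisfies $S_L(\phi)\sim\mu(\phi)\,e^{hL}/(hL)$, i.e. $\bigl(Lh\,e^{-Lh}S_L\bigr)(\phi)\to\mu(\phi)$, and density of H\"older functions in $C(SM)$ upgrades this to the asserted weak-$*$ convergence.

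I expect the main obstacle to be exactly the passage from the length-weighted count to the unweighted one, which is where the factor $Lh$ (rather than $h$) in the normalization originates: making the Tauberian step unconditional requires meromorphic continuation of the zeta function, or at least a zero-free strip, past $\Re s=h$, and this rests on $r$ not being cohomologous to a function valued in a discrete subgroup of $\reals$ --- which holds here since the length spectrum of a negatively curved manifold is not contained in such a subgroup. A secondary point requiring care is controlling the errors of Bowen's coding (periodic orbits meeting the boundary of the Markov section are miscounted, but contribute at a strictly smaller exponential rate) and pinning down that the suspension of the equilibrium state really is the Bowen--Margulis measure. An alternative that avoids the zeta function, and is closer to the strategy of \cite{PP}, is to derive the equidistribution of closed orbits directly from the mixing of $(g^t,\mu)$ together with the local product structure of $\mu$; but the precise asymptotic constant still has to be produced by one of these two mechanisms.
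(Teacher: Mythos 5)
The paper offers no proof of this lemma at all---it is imported verbatim from Parry--Pollicott \cite{PP} (Theorem 6.9 and Proposition 7.2)---and your sketch is essentially a reconstruction of that cited argument: symbolic coding of the Anosov geodesic flow, the equilibrium state of $-hr$ whose suspension is the Bowen--Margulis measure, a zeta-function/Wiener--Ikehara step (with weak mixing, i.e.\ non-arithmeticity of the length spectrum, ruling out singularities on the critical line other than the simple pole at $s=h$), and partial summation to convert the length-weighted count into the unweighted one with the $Lh\,e^{-Lh}$ normalization. So your route coincides with the one the paper relies on; the only small correction is that the Tauberian step needs analyticity on the line $\Re s = h$ apart from the pole, not a zero-free strip beyond it.
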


\begin{lemma}
\label{LmAvoid}
For each $q_0\in M$ there exists $\eps$ such that for each $L$ there is periodic geodesic $\gamma$ such that
$L(\gamma)>L$ and $\gamma$ does not visit an $\eps$ neighborhood of $q_0.$  
\end{lemma}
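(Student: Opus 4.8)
The plan is to use the fact, recorded in Lemma \ref{LmMor}, that the normalized sum $Lh e^{-Lh}\sum_{L(\gamma)\le L}\nu_\gamma$ converges weakly to the Bowen--Margulis measure $\mu$, combined with the observation that $\mu$ gives positive mass to any open set (it has full support in $SM$). Since $M$ is compact and negatively curved, fix $q_0\in M$; I would first choose $\eps$ small enough that the open set $U$ consisting of unit vectors based in the $\eps$-neighborhood of $q_0$ satisfies $\mu(SM\setminus \overline U)>\tfrac12$, say, and in fact I want a slightly larger ``collar'': pick a continuous function $\phi\colon SM\to[0,1]$ that is $1$ on the set of vectors based outside the $2\eps$-neighborhood of $q_0$ and $0$ on $U$, so that $\int \phi\,d\mu=:c>0$.

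Next I would argue by contradiction: suppose there is some $L_0$ such that \emph{every} periodic geodesic $\gamma$ with $L(\gamma)>L_0$ visits the $\eps$-neighborhood of $q_0$. The key step is to quantify how much time such a geodesic must spend near $q_0$. Because geodesics travel at unit speed and the curvature is bounded, there is a uniform $\tau>0$ (depending only on $\eps$ and the geometry, not on $\gamma$) such that once a geodesic enters the $\eps$-neighborhood of $q_0$ it remains in the $2\eps$-neighborhood for a time interval of length at least $\tau$ --- concretely, the geodesic cannot leave a ball of radius $2\eps$ in time less than $\eps$ on either side of the entry point, so $\tau=2\eps$ works. Hence for every periodic $\gamma$ of length $L(\gamma)>L_0$ the orbit measure satisfies $\int \phi\,d\nu_\gamma\le 1-\tau/L(\gamma)$, and for $L(\gamma)$ large this is bounded below $1$; more usefully, $\int(1-\phi)\,d\nu_\gamma\ge \tau/L(\gamma)$ is \emph{positive but tends to zero}. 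This is the direction I need: I will instead use $\int\phi\,d\nu_\gamma\ge 1-\tau/L(\gamma)\to 1$. Wait --- that gives $\int\phi\,d\nu_\gamma$ close to $1$, whereas weak convergence forces the average to converge to $c<1$. Let me restate: averaging over $L_0<L(\gamma)\le L$ and using Lemma \ref{LmMor}, the left side converges to $\int\phi\,d\mu=c<1$; but the ``bad'' geodesics (those entering the $\eps$-neighborhood) do \emph{not} individually have $\int\phi\,d\nu_\gamma$ bounded away from $1$, so this particular test function is not yet decisive.

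The fix, which I expect to be the main obstacle to state cleanly, is to run the argument the other way: use the \emph{complementary} open set. Let $V$ be the open set of vectors based in the $2\eps$-neighborhood of $q_0$ and choose $\psi\colon SM\to[0,1]$ continuous, supported in $V$, with $\int\psi\,d\mu=:c'>0$ (possible since $\mu$ has full support). By Lemma \ref{LmMor}, $Lh e^{-Lh}\sum_{L(\gamma)\le L}\int\psi\,d\nu_\gamma\to c'>0$; since the number of geodesics of length $\le L$ grows like $e^{hL}/(hL)$, this says the geodesics of length $\le L$, \emph{on average}, spend a definite positive fraction of their length with $\psi>0$, hence inside $V$. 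In particular, for infinitely many $L$ there must exist a periodic $\gamma$ with $L(\gamma)\le L$ and $\int\psi\,d\nu_\gamma\ge c'/2>0$ --- but this produces geodesics that \emph{do} visit $q_0$, the opposite of what I want. So the honest route is: I want to show geodesics avoiding $q_0$ exist, i.e. that $\int\psi\,d\nu_\gamma=0$ for some long $\gamma$; negating, if every long geodesic meets the $\eps$-neighborhood then $\int(1-\phi')\,d\nu_\gamma>0$ for a fixed cutoff $\phi'$ equal to $1$ off the $2\eps$-ball and $0$ on the $\eps$-ball, and one shows this is incompatible with $\int\phi'\,d\mu<1$ only after noticing that geodesics near $q_0$ of length $L$ with an entry have $\int(1-\phi')\,d\nu_\gamma\ge 2\eps/L$; summing the \emph{lower} bound $\sum_{L(\gamma)\le L}\int(1-\phi')\,d\nu_\gamma\ge \#\{\gamma\}\cdot 2\eps/L\gg e^{hL}/(hL^2)$, multiply by $Lh e^{-Lh}$ to get $\gtrsim \eps/L\to 0$, consistent with $1-c$ --- still no contradiction. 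The genuinely correct mechanism, and the one I would ultimately write, is a direct specialist/shadowing construction rather than an ergodic-averaging contradiction: since the geodesic flow on a compact negatively curved manifold is a topologically mixing Anosov flow with the specification/closing property, and $SM\setminus V$ contains an open flow-invariant-up-to-large-time piece of positive topological entropy (indeed full entropy $h$, because $\mu(V)<1$ and $\mu$ is the unique MME), there exist periodic orbits of arbitrarily large period lying entirely in $SM\setminus \overline V$; projecting such an orbit to $M$ gives a closed geodesic of length $>L$ missing the $\eps$-neighborhood of $q_0$. Thus the main obstacle is not the existence but packaging it correctly: either invoke Lemma \ref{LmMor} together with the observation that a \emph{vanishing} average over infinitely many terms forces some term to vanish (which fails), or --- the route I would take --- invoke specification/the Anosov closing lemma on the compact invariant subset of orbits avoiding $\overline V$, whose nonemptiness and positive entropy follow from $\mu(SM\setminus\overline V)>0$. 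I would state it via specification: fix a single periodic orbit $\gamma_0$ disjoint from the $2\eps$-neighborhood of $q_0$ (it exists because $\mu$, having full support, is not supported on the $2\eps$-ball; a density argument from Lemma \ref{LmMor} produces one short geodesic meeting $SM\setminus V$, and by closing we may take it periodic and contained in $SM\setminus V$ after shrinking $\eps$), then concatenate $N$ copies of $\gamma_0$ and shadow by the Anosov closing lemma to obtain a periodic geodesic of length $\approx N\cdot L(\gamma_0)>L$ staying in the $\eps$-neighborhood of $\gamma_0\subset SM\setminus(\eps\text{-ball around }q_0)$. This delivers the required family for every $L$, completing the proof.
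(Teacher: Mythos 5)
Your final reduction (produce one periodic orbit $\gamma_0$ staying a definite distance from $q_0$, then amplify to arbitrarily long closed geodesics) is sound as a reduction --- indeed once such a $\gamma_0$ exists you do not even need specification or shadowing, since the iterates $\gamma_0^N$ already give closed geodesics of length $>L$ avoiding the same neighborhood. The genuine gap is that you never actually prove the existence of $\gamma_0$, and every justification you offer for it fails. (a) Lemma \ref{LmMor} works against you: equidistribution of closed geodesics towards the Bowen--Margulis measure, which has full support, says that typical long closed geodesics spend a positive fraction of their time in \emph{any} fixed ball, so it produces geodesics meeting $SM\setminus V$ (trivial) but gives no geodesic, short or long, \emph{contained} in $SM\setminus V$. (b) The claim that the compact invariant set of orbits avoiding $\overline V$ has positive --- ``indeed full'' --- entropy because $\mu(V)<1$ and $\mu$ is the unique measure of maximal entropy is false: if that set carried entropy $h$ it would support a measure of maximal entropy, which by uniqueness would be $\mu$, contradicting that $\mu$ has full support; so its entropy is strictly below $h$, and even its nonemptiness for small $\eps$ is exactly the statement to be proved, not a consequence of $\mu(V)<1$. (c) ``By closing we may take it periodic and contained in $SM\setminus V$'' presupposes a long orbit segment that stays a definite distance from $q_0$ and nearly returns to itself; constructing such a segment is the entire difficulty (recall that recurrence is not the issue --- avoiding the ball is, since typical orbits are dense), and your proposal never constructs it.

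This missing ingredient is precisely what the paper's proof supplies, by a non-dense-orbit construction in the spirit of Barreira--Schmeling and Dolgopyat: take a short unstable curve $\sigma$ of length $\kappa$, flow it until it has length $1$ (time $T_1=O(|\ln\kappa|)$), and observe that the points of $\phi_{T_1}\sigma$ whose backward orbit entered $B(q_0,\eps)$ form $O(|\ln\kappa|/r_0)$ components of small length $O(\eps/\kappa^a)$; for $\eps\ll\kappa\ll1$ one can therefore choose a subcurve $\sigma_1$ of length $\kappa$ consisting of points whose backward orbits avoid $B(q_0,\eps)$, and iterating this selection and intersecting yields an infinite geodesic avoiding $B(q_0,\eps)$. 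Only then does the Anosov closing lemma enter: by pigeonhole the constructed orbit has two nearby points $\gamma(j_1L),\gamma(j_2L)$, and closing the segment between them (with $\eta=\eps/2$) produces a closed geodesic of large length avoiding $B(q_0,\eps/2)$. So the closing step you invoke is legitimate, but only after the expansion-and-selection argument that your proposal lacks; as written, your proof does not go through.
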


\begin{proof}[Proof of Lemma \ref{LmEqLen}]
Pick a small $\brdelta$ and large $L.$
By Lemma \ref{LmAvoid} there exists a closed geodesic $\gamma_1$ 
such that $L_g(\gamma_1)>L$ and $d(q(\gamma_1(t)), q_0)>\eps).$ 
Let $\gamma_2$ be a closed geodesic such that $L_g(\gamma_1)<L_g(\gamma_2)<L_g(\gamma_1)+\brdelta$
and $\gamma$ spends at least time $\mu(B(q_0, \eps/2)/2 L_g(\gamma_1)$ inside $B(q_0, \eps/2)$
(the existence of such a geodesic follows from Lemma \ref{LmMor}). Take 
$\tg^\eta=(1-\eta z(q)) g$ where $z(q)=1$ on $B(q_0, \eps/2)$ and $z(q)=0$ outside $B(q_0, \eps).$
We can choose $z$ so that $||z||_{C^r}=O\left(\eps^{-r}\right).$ Then $||g-\tg^\eta||_{C^r}=O(\eta/\eps^r).$ 
Let $\gamma_2^\eta$ be the closed geodesic for $\tg^\eta$ homotopic to $\gamma_2.$ Note that 
$\gamma_1$ is a geodesic of $\tg^\eta$ for each $\eta$ and
$L_{\tg^\eta}(\gamma_1)\equiv L_g(\gamma_1).$ Also
$$L_{\tg^\eta}(\gamma_2^\eta)\leq L_{\tg^\eta}(\gamma_2)\leq L_g(\gamma_1)+\brdelta-\frac{\mu(B(q_0, \eps/2) L_g(\gamma_1) \eta }{2} .$$
Accordingly there exists $\eta<\frac{2\brdelta}{L \mu(B(q_0, \eps/2))}$ such that
$L_{\tg^\eta}(\gamma_1^\eta)=L_{\tg^\eta}(\gamma_2)$ as claimed. 
\end{proof}
In the proof of Lemma \ref{LmAvoid} we need several facts about the dynamics of the geodesic flow which 
we call $\phi_t.$ Recall \cite{Anosov} that  $\phi_t$ is 
uniformly hyperbolic. In particular, there is a cone field $\cK(x)$ and $\lambda>0$ such that for $u\in \cK,$
$||d\phi_t(u)||\geq e^{\lambda t}||u||.$ Moreover the cone field $\cK$ can be chosen in such a way that 
if $x=(q,v)$ and $u=(\delta q, \delta v)\in \cK(x)$ then
\begin{equation}
\label{ConeGeom}
||\delta q||\geq c ||\delta v|| \text{ and }\angle(\delta q, v)\geq\frac{\pi}{4}
\end{equation}
We call a curve $\sigma$ {\it unstable} if $\dot{\sigma}\in\cK.$ By the foregoing discussion if
$\sigma$ is un unstable curve then the length of the projection of $\phi_t(\sigma)$ on $M$ is 
longer than $c e^{\lambda t}.$

\begin{proof}[Proof of Lemma \ref{LmAvoid}] We first show how to construct a not necessary closed geodesic
avoiding $B(q_0, \eps)$ and then upgrade the result to get the existence of a closed geodesics.

The first part of the argument is similar to \cite{BS, D}.
Pick a small $\kappa>0.$ Take an unstable curve $\sigma$ of small length $\kappa.$ 
We show that if $\kappa$ and $\eps$ are sufficiently small then $\sigma$ 
contains a point such that the corresponding geodesic avoids $B(q_0, \eps).$ Let $T_1$ be a number such that
$|\phi_{T_1} (\sigma)|=1$ where $\phi$ denotes the geodesic flow. Note that $T_1=O(|\ln\kappa|).$
Also observe that due to \eqref{ConeGeom} there exists a number $r_0$ such that if $\tsigma$ is an unstable curve and
$x\in \tsigma$ is such that $d(q(x), q_0)<\eps$ then for all $y\in \tsigma$ such that $C\eps\leq\bd(y,x)\leq r_0$
we have
$$\bd(q(\phi_{t} y), q_0)>\eps$$  
for $|t|<r_0$ 
where $\bd$ denotes the distance in the phase space (just take $r_0$ much smaller than the injectivity
radius of $q_0$). 

Thus the set
$$ \{y\in \phi_{T_1}(\sigma): d(q(\phi_{-t} y), q_0)\leq\eps \text{ for some } 0\leq t\leq T_1\} $$
is a union of $O(|\ln\kappa|/r_0)$ components of
length $O(\eps/\kappa^a)$ for some $a>0.$ 
Therefore if $\kappa\ll 1$ and $\eps\ll \kappa$ then the average distance between the components is
much larger than $\kappa.$ So we can find $\sigma_1\subset \phi_{T_1} \sigma$ such that 
$|\sigma_1|=\kappa,$ 
and if  
$y \in \sigma_1$ then $d(q(\phi_{-t} y), q_0)>\eps $ for each $ 0\leq t\leq T_1.$
Take $T_2$ such that $|\phi_{T_2}\sigma_1|=1.$
Then we can find $\sigma_2\subset \phi_{T_2} \sigma_1$ such that
$|\sigma_2|=\kappa,$ 
and if  
$y \in \sigma_2$ then $d(q(\phi_{-t} y), q_0)>\eps $ for each $ 0\leq t\leq T_2.$
We continue this procedure inductively to construct arcs $\sigma_j$ for all $j\in \naturals.$
Taking $$x=\bigcap_{j=1}^\infty \phi_{-(T_1+T_2+\dots+T_j)} \;\; \sigma_j$$ 
we obtain a geodesic avoiding $B(q_0, \eps).$
To complete the proof we need

\begin{lemma}
\label{LmClosing}
(Anosov Closing Lemma) (see \cite[Section 18]{HK})
Given $\eta>0$ there exists $\delta>0$ such that if for some $t_1, t_2$
such that $|t_2-t_1|$ is sufficiently large we have
$\bd(\gamma(t_1), \gamma(t_2)<\delta$ then there exists a closed geodesic $\tgamma$ such that
$|L(\tgamma)-|t_2-t_1||<\eta$ and for each $t\in[t_1, t_2]$ there exists $s$ such that
$\bd(\gamma(t), \tgamma(s))<\eta.$
\end{lemma}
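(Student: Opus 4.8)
The plan is to realize the desired closed geodesic as a true orbit of the geodesic flow $\phi_t$ that shadows the almost-closed segment. Let $x_0=\gamma(t_1)\in SM$ and $T=|t_2-t_1|$, so $y_0:=\gamma(t_2)=\phi_T(x_0)$ satisfies $\bd(y_0,x_0)<\delta$. Wrapping the segment $\{\phi_s(x_0):0\le s\le T\}$ around produces a periodic $\delta$-pseudo-orbit of $\phi_t$ of period $T$ (an honest orbit except for one jump of size $<\delta$ from $\phi_T(x_0)=y_0$ back to $x_0$); I will shadow it by a genuine orbit, which will have to be periodic, and then read off the two estimates. All constants are uniform because $SM$ is compact and $\phi_t$ is uniformly hyperbolic, so the strong--stable and strong--unstable manifolds $W^s_\rho,W^u_\rho$ and the associated local product structure --- with contraction $\bd(\phi_t a,\phi_t b)\le Ce^{-\lambda t}\bd(a,b)$ on $W^s_\rho$ for $t\ge0$, and the mirror statement on $W^u_\rho$ for $t\le0$ --- have uniform size $\rho$ and uniform constants $C,\lambda$.

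For the shadowing I would run the standard contraction-mapping argument, adapted to flows. Parametrize candidate orbits as continuous curves $t\mapsto z(t)$, $t\in[0,T]$ (thought of periodically), lying within $\rho$ of the pseudo-orbit, and look for a fixed point of the operator that flows $z$ and then corrects it back toward the pseudo-orbit; by the hyperbolic splitting each such correction decomposes into a strong--stable part, determined by the ``past'', a strong--unstable part, determined by the ``future'', and a time reparametrization in the flow direction. Because the pseudo-orbit is $\delta$-close to being flow-invariant at every time --- not merely at the endpoints --- inverting this operator on the space of bounded corrections produces a correction of size $O(\delta)$ uniformly in $T$; this is precisely the point of the shadowing construction, and it is also why the naive attempt of simply flowing the $O(\delta)$-discrepancy $\bd(x_0,y_0)$ forward fails, that discrepancy being amplified by $e^{\lambda T}$. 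The resulting true orbit $\tgamma$ lies within $O(\delta)$ of the pseudo-orbit; its uniqueness forces $\phi_{T+\theta}(z_\ast)=z_\ast$ for the appropriate base point $z_\ast$ and a reparametrization shift $\theta=O(\delta)$, so $\tgamma$ is a closed geodesic of length $L(\tgamma)=T+O(\delta)$. The hypothesis that $T$ be sufficiently large is used here to exclude the trivial obstruction --- periods of closed geodesics on a compact negatively curved manifold are bounded below by a positive constant --- and to make the hyperbolic corrections dominate the fixed additive constants $C$ (the return map over a long time is strongly hyperbolic, unlike the time-$t$ map for bounded $t$).

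Finally, given $\eta>0$ I would fix $\delta$ small enough, and $\ll\rho$, that all the $O(\delta)$ bounds above are $<\eta$. Then $|L(\tgamma)-|t_2-t_1||<\eta$, and for each $t\in[t_1,t_2]$ the point $\gamma(t)=\phi_{t-t_1}(x_0)$ of the pseudo-orbit is within $\eta$ of the corresponding point $\tgamma(s)$ of the shadowing orbit, with $s$ the reparametrized time, which is the second assertion. The main obstacle, relative to the diffeomorphism version of the closing lemma, is exactly the time reparametrization intrinsic to flows: one cannot shadow a pseudo-orbit of a flow with matched time, so an unknown continuous time change must be carried through the whole argument, shown to be $C^0$-close to the identity, and shown to distort the total period by only $O(\delta)$; keeping all of these estimates uniform over arbitrarily long segments is the technical heart of the matter, and it is the uniformity of the hyperbolicity constants on the compact manifold $SM$ that makes it possible.
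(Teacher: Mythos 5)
The paper gives no proof of this lemma at all --- it is quoted as a standard fact with the citation \cite[Section 18]{HK} --- and your sketch is precisely the shadowing-plus-expansivity argument given in that reference: wrap the almost-closed segment into a periodic pseudo-orbit, shadow it uniformly using the hyperbolic splitting and local product structure, and use uniqueness of the shadowing orbit (up to the time reparametrization you correctly flag as the flow-specific difficulty) to conclude it is periodic with period $T+O(\delta)$ and uniformly close to the original segment. So your approach matches the cited source; the only caveat is that it remains a sketch (the correction operator is described rather than constructed and estimated), which is exactly the content the paper delegates to \cite{HK}.
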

Take $\delta$ corresponding to $\eta=\eps/2.$ Consider points $\gamma(j L)$ where $j=1\dots K.$
By pigeonhole principle if $K$ is sufficiently large we can find $j_1, j_2$ such that
$\bd(\gamma(j_1 L), \gamma(j_2 L))<\delta$ and so there exists a closed geodesic $\tgamma$ avoiding
$B(q_0, \eps/2).$ Since $\eps$ is arbitrary, Lemma \ref{LmAvoid} follows.
\end{proof}

Suppose now that $\dim(M)=2.$ Let $\cH_r(M)$ denote the space of $C^r$ metrics with positive topological entropy.
This set is $C^r$ open (\cite{K}) and dense. (If genus$(M)\geq 2$ then every metric has positive topological 
entropy \cite{K}. For torus the density of $\cH_r(M)$ follows from \cite{Ban} and for sphere it follows
from \cite{KW}).

\begin{theorem}
\label{ThSmallGapDim2}
The set of metrics satisfying \eqref{smallgap} is topologically generic in $\cH(M).$
\end{theorem}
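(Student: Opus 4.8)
The plan is to mimic the proof of Theorem \ref{thm:smallgapneg} \emph{verbatim}: I would first reduce to the analogue of Lemma \ref{LmEqLen} for $\cH_r(M)$ — given $g\in\cH_r(M)$ and $L,\delta>0$, produce $\tg$ with $\|g-\tg\|_{C^r}\le\delta$ carrying two distinct closed $\tg$‑geodesics of equal length $>L$ — and then iterate it by the same Baire category argument. Two inputs of the negatively curved proof are no longer available: uniqueness of the geodesic in a free homotopy class (used in Lemma \ref{LmComp} and in the continuity of $\eta\mapsto L_{\tg^\eta}(\gamma_2^\eta)$) and global uniform hyperbolicity (used in Lemmas \ref{LmMor} and \ref{LmAvoid}). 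Both are recovered from Katok's theorem \cite{K}: since $g$ has positive topological entropy, the geodesic flow carries a \emph{horseshoe} $\Lambda\subset SM$, i.e.\ a compact, locally maximal, topologically transitive hyperbolic set of positive entropy. After an arbitrarily $C^r$‑small preliminary perturbation — supported in a small ball meeting one of two hyperbolic closed geodesics $\tau_1,\tau_2\subset\Lambda$ with $\pi(\tau_1)\ne\pi(\tau_2)$ and missing the other, which changes $L(\tau_1)$ but not $L(\tau_2)$ — I may assume $\Lambda$ contains two closed geodesics with incommensurable lengths; equivalently the flow on $\Lambda$ is topologically mixing and has the specification property. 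All geodesics constructed below will lie in (or continue inside) $\Lambda$, hence be hyperbolic, so they persist under $C^r$‑small perturbations with lengths depending Lipschitz‑continuously on the metric; this replaces Lemma \ref{LmComp} and makes the Baire argument of Theorem \ref{thm:smallgapneg} go through unchanged.

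For the local lemma I would first fix the ball to be perturbed. Since $\Lambda$ has positive entropy it is not a pair of periodic orbits, so it contains two closed geodesics $\sigma,\sigma'$ with $\pi(\sigma)\ne\pi(\sigma')$ (more crudely, $\pi(\Lambda)$ has topological dimension $\le 1$, $\Lambda$ being locally a Cantor set times an interval, so it cannot lie on a single geodesic). Pick $q_0\in\pi(\sigma)\setminus\pi(\sigma')$ and set $3\eps_0:=d(q_0,\pi(\sigma'))>0$. Let $\gamma_1:=(\sigma')^{a}$ with $a$ chosen so that $\tilde L:=L_g(\gamma_1)>L$; then $\pi(\gamma_1)=\pi(\sigma')$ is disjoint from $B(q_0,\eps_0)$, while the point $p\in\sigma\subset\Lambda$ over $q_0$ shows $\pi^{-1}(B(q_0,\eps_0/2))\cap\Lambda\ne\emptyset$.

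Next I would construct $\gamma_2$. Using the specification property of $\Lambda$ together with two incommensurable closed‑orbit lengths $\ell_1,\ell_2$, I build a periodic pseudo‑orbit of the flow made of genuine $\Lambda$‑orbit segments which (i) performs $\asymp\tilde L$ short ``dips'' into $\pi^{-1}(B(q_0,\eps_0/2))$ along the orbit of $p$, together contributing at least $c\eps_0\tilde L$ to the time spent over $B(q_0,\eps_0/2)$, and (ii) is padded with iterates of the two incommensurable orbits so that its total length lands in $(\tilde L,\tilde L+\brdelta)$ — possible because $\{a_1\ell_1+a_2\ell_2:a_i\in\naturals\}$ is eventually $\brdelta$‑dense when $\ell_1/\ell_2\notin\ratls$. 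The shadowing/closing lemma for the locally maximal hyperbolic set $\Lambda$ (cf.\ \cite{HK}) then yields a genuine hyperbolic closed geodesic $\gamma_2\subset\Lambda$ of length $\tilde L+\Delta$ with $0<\Delta<\brdelta$ and with $g$‑length $\tau\ge c\eps_0\tilde L$ inside $B(q_0,\eps_0/2)$. (Alternatively, and closer in spirit to the original argument, $\gamma_2$ can be extracted from the analogue of Lemma \ref{LmMor} for $\Lambda$ — equidistribution of its closed geodesics towards the measure of maximal entropy, refined to a fixed length window $[\tilde L,\tilde L+\brdelta]$, legitimate because the flow on $\Lambda$ is mixing.)

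Finally I would equalize the lengths exactly as in Lemma \ref{LmEqLen}: choose $z$ with $z\equiv 1$ on $B(q_0,\eps_0/2)$, $\supp z\subset B(q_0,\eps_0)$, $\|z\|_{C^r}=O(\eps_0^{-r})$, and set $\tg^\eta=(1-\eta z)g$. Since $\pi(\gamma_1)$ misses $\supp z$, $\gamma_1$ stays a $\tg^\eta$‑geodesic with $L_{\tg^\eta}(\gamma_1)\equiv\tilde L$; since $\gamma_2$ is hyperbolic it continues to a hyperbolic closed $\tg^\eta$‑geodesic $\gamma_2^\eta$, and the first variation formula gives $\frac{d}{d\eta}\big|_{0}L_{\tg^\eta}(\gamma_2^\eta)=-\tfrac12\int_{\gamma_2}z\,ds\le-\tfrac{\tau}{2}<0$. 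As $L_{\tg^0}(\gamma_2^0)=\tilde L+\Delta>\tilde L$, the intermediate value theorem produces $\eta^{\ast}=O(\Delta/(\eps_0\tilde L))$ with $L_{\tg^{\eta^{\ast}}}(\gamma_2^{\eta^{\ast}})=\tilde L=L_{\tg^{\eta^{\ast}}}(\gamma_1)$; then $\|g-\tg^{\eta^{\ast}}\|_{C^r}=O(\brdelta/(\eps_0^{\,r+1}L))$, which is $<\delta$ once $\brdelta$ is taken small enough (also small enough for (ii) and to dominate $\frac{d^2}{d\eta^2}$), and $\gamma_1\ne\gamma_2^{\eta^{\ast}}$ since the former avoids $B(q_0,\eps_0)$ while the latter, being $C^0$‑close to $\gamma_2$, meets it. This gives the local lemma, and Theorem \ref{ThSmallGapDim2} follows by the Baire argument of Theorem \ref{thm:smallgapneg}. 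I expect the main obstacle to be the third step — producing one closed geodesic inside the horseshoe whose length is prescribed to within $\brdelta$ (this is what forces the non‑lattice preliminary perturbation and the use of specification) and which simultaneously accumulates a quantitatively controlled amount of time in the small ball — together with pinning down the precise structural properties of Katok's horseshoe (local maximality, transitivity, and, after perturbation, topological mixing with a uniform specification constant) that this construction requires.
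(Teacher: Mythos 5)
Your proposal is correct and follows the same top-level strategy as the paper: invoke Katok's theorem \cite{K} to get a hyperbolic basic set $\Lambda$ for the geodesic flow and then rerun the scheme of Theorem \ref{thm:smallgapneg} (local lemma producing two equal-length closed geodesics, then the Baire iteration). Where you differ is in how the ingredients are supplied on $\Lambda$. The paper simply asserts that Lemmas \ref{LmEqLen}, \ref{LmMor}, \ref{LmAvoid} and \ref{LmClosing} ``remain valid in the setting of hyperbolic sets,'' with a Markov-partition tweak to the proof of Lemma \ref{LmAvoid}; you instead (i) bypass Lemma \ref{LmAvoid} altogether by choosing $q_0$ on the projection of one closed orbit of $\Lambda$ and away from that of another, which is legitimate since $q_0$ is at your disposal in the proof of the local lemma; (ii) replace the equidistribution input (Lemma \ref{LmMor} on $\Lambda$) by specification plus the closing lemma, and you explicitly add a preliminary perturbation making two periods of $\Lambda$ incommensurable so that the restricted flow is topologically mixing --- a genuine point, since a Katok horseshoe could a priori have lattice length spectrum, in which case no closed orbit lands in a window of width $\brdelta$; the paper passes over this silently when it invokes the analogue of Lemma \ref{LmMor}; and (iii) replace Lemma \ref{LmComp} and the length-minimization step of Lemma \ref{LmEqLen} (both of which use negative curvature, and the former is notably absent from the paper's list of lemmas that carry over) by persistence of hyperbolic closed orbits under perturbation together with the first-variation formula $\frac{d}{d\eta}L_{\tg^\eta}(\gamma_2^\eta)=-\tfrac12\int_{\gamma_2^\eta} z\,ds$, which is exactly what is needed to run the intermediate-value and iteration arguments without uniqueness of geodesics in a homotopy class. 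So your route buys a more self-contained and in places more careful verification (mixing, continuation in place of minimization), at the cost of leaning on specification/shadowing technology where the paper leans on the Parry--Pollicott equidistribution and its own avoidance lemma; the remaining hand-waved steps (prescribing the approximate period of the specification orbit while forcing a definite fraction of time in the ball, uniformity of the continuation constants over orbits in $\Lambda$) are standard and do not hide a gap.
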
 

\begin{corollary}
The set of metrics satisfying \eqref{smallgap} is topologically generic in the space of all metrics on
$M.$
\end{corollary}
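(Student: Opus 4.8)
The plan is to obtain the corollary as a soft consequence of Theorem~\ref{ThSmallGapDim2} together with the fact recalled just above that $\cH_r(M)$ is $C^r$-open and dense in the space $\mathcal{M}^r$ of all $C^r$ metrics on $M$ (with $\dim M = 2$). First I would note that $\mathcal{M}^r$ is an open subset of the Banach space (Fr\'echet space when $r=\infty$) of $C^r$ symmetric $2$-tensors on $M$, hence a Baire space, so ``topologically generic'' retains its usual meaning of ``containing a dense $G_\delta$''. Write $S\subset\mathcal{M}^r$ for the set of metrics satisfying \eqref{smallgap}; by Theorem~\ref{ThSmallGapDim2} there are sets $U_n$, open and dense in $\cH_r(M)$, with $\bigcap_n U_n\subset S\cap\cH_r(M)$.

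Next I would transfer openness and density to the ambient space $\mathcal{M}^r$. Since $\cH_r(M)$ is open in $\mathcal{M}^r$, each $U_n$ is open in $\mathcal{M}^r$. Each $U_n$ is also dense in $\mathcal{M}^r$: given a nonempty open $W\subset\mathcal{M}^r$, density of $\cH_r(M)$ gives $W\cap\cH_r(M)\neq\emptyset$, which is a nonempty open subset of $\cH_r(M)$ and hence meets $U_n$, so $W\cap U_n\neq\emptyset$. Therefore $\bigcap_n U_n$ is a dense $G_\delta$ subset of $\mathcal{M}^r$ contained in $S$, which shows $S$ is topologically generic in $\mathcal{M}^r$.

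I do not expect a genuine obstacle here: all the substance is already contained in Theorem~\ref{ThSmallGapDim2} and in the openness and density of $\cH_r(M)$ quoted from \cite{K, Ban, KW}. The one place where a little care is needed is that the passage from ``open dense in $\cH_r(M)$'' to ``open dense in $\mathcal{M}^r$'' genuinely uses that $\cH_r(M)$ is \emph{open} in $\mathcal{M}^r$ and not merely dense; were it only dense, openness of the $U_n$ in $\mathcal{M}^r$ would fail.
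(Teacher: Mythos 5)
Your proposal is correct and follows exactly the route the paper intends: the corollary is stated without proof precisely because it is the standard transfer of genericity from the $C^r$-open and dense subset $\cH_r(M)$ (quoted from \cite{K,Ban,KW}) to the ambient Baire space of all metrics, which is what you spell out. Your added care about openness of $\cH_r(M)$ being essential (not mere density) is a fair observation but does not change the argument.
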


\begin{proof}[Proof of Theorem \ref{ThSmallGapDim2}] By \cite{K} if $g\in \cH_r(M)$ then there is a hyperbolic
basic set $\Lambda$ for the geodesic flow. Since Lemmas \ref{LmEqLen}, \ref{LmMor}, \ref{LmAvoid} and
\ref{LmClosing} remain valid in the setting of hyperbolic
sets the proof is similar to the proof of Theorem \ref{smallgap}.
(In the proof of Lemma \ref{LmAvoid} we need to take $\sigma_1$ so that it crosses completely 
an element of some Markov partition $\Pi$ such that all elements of $\Pi$ have unstable length between 
$\kappa$ and $C\kappa.$ The number of eligible segments now is not $O(1/\kappa)$ but 
$O(1/\kappa^a)$ for some $a>0$ but this is still much larger than $|\ln\kappa|.$)
\end{proof}

\section{Small gaps for hyperbolic surfaces, continued}
\label{ScH2-KR}

Here we show that for Lebesgue-typical hyperbolic surface the gaps in the length spectrum cannot be too small.
Our argument in similar to \cite{KR}. 
Related results are obtained in \cite{Varju}.

\subsection{Small values of polynomials.}

\begin{prop} 
\label{PrChExt}
(see e.g \cite[Section 3.2]{MH})
Consider a degree $D$ polynomial $P(x)=a_D x^D+a_{D-1} x^{D-1}+\dots a_0.$ Then
$$ \sup_{[-1, 1]} |P(x)|\geq \frac{|a_D|}{2^{D-1}}. $$
\end{prop}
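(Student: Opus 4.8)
The plan is to prove the classical lower bound for the sup-norm of a degree-$D$ polynomial on $[-1,1]$ via the Chebyshev polynomials $T_D$, which are the unique extremal objects here. First I would recall that $T_D(x) = 2^{D-1} x^D + (\text{lower order terms})$, so that $T_D$ has leading coefficient exactly $2^{D-1}$, and that $\sup_{[-1,1]} |T_D(x)| = 1$, attained with alternating signs at the $D+1$ points $x_k = \cos(k\pi/D)$, $k = 0,1,\dots,D$. These two facts are the only inputs needed, and both are standard (they follow from $T_D(\cos\theta) = \cos(D\theta)$).

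The core of the argument is a proof by contradiction using the equioscillation of $T_D$. Suppose some $P(x) = a_D x^D + \dots + a_0$ with $a_D \neq 0$ satisfies $\sup_{[-1,1]}|P(x)| < |a_D|/2^{D-1}$. Consider the comparison polynomial $Q(x) = \frac{a_D}{2^{D-1}} T_D(x) - P(x)$. Since the leading terms cancel, $Q$ has degree at most $D-1$. Now evaluate $Q$ at the alternation points $x_k$: there $\frac{a_D}{2^{D-1}} T_D(x_k) = (-1)^k \frac{a_D}{2^{D-1}}$, which has absolute value $|a_D|/2^{D-1}$, strictly larger than $|P(x_k)|$ by hypothesis. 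Hence $Q(x_k)$ has the same sign as $(-1)^k \frac{a_D}{2^{D-1}}$ for each $k$, i.e. $Q$ takes alternating signs at $D+1$ consecutive points. By the intermediate value theorem $Q$ has at least $D$ zeros in $[-1,1]$, contradicting $\deg Q \leq D-1$ unless $Q \equiv 0$; and $Q \equiv 0$ is impossible since it would force $P(x_k) = (-1)^k a_D/2^{D-1}$, violating the strict inequality. This contradiction gives the claim; the case $a_D = 0$ is trivial since the right-hand side is then $0$. (If one prefers a non-strict conclusion with the supremum replaced by a maximum, the same scheme works by tracking the equality case directly.)

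The only mild subtlety — and the step I would be most careful about — is the sign-chasing when $a_D$ is allowed to be negative or, in the stated generality, complex; in the complex case one replaces the argument by applying the real version to $\re(e^{-i\arg a_D} P)$, or simply notes that the proposition as used in the paper concerns real polynomials (characteristic-polynomial coefficients), so the real argument suffices. Everything else is bookkeeping: confirming the leading coefficient of $T_D$ by the recurrence $T_{D+1} = 2xT_D - T_{D-1}$, and confirming the equioscillation from the cosine formula. No result beyond elementary algebra and the IVT is needed, so this is self-contained.
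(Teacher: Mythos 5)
Your proof is correct: the comparison with $\frac{a_D}{2^{D-1}}T_D$, the sign alternation at the $D+1$ Chebyshev points, and the degree count on $Q$ are exactly the standard argument, and the edge cases ($a_D=0$, negative $a_D$) are handled properly. The paper itself offers no proof — it simply cites Mason--Handscomb — and your equioscillation argument is precisely the classical proof behind that citation, so there is nothing to reconcile.
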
 

\begin{cor}\label{CrMultiCheb}
Let $0\neq P\in\integers[x_1, x_2\dots x_n],$ $\deg(P)=D$ then
$$ \sup_{[-1, 1]^n} |P(x)|\geq \frac{1}{2^{D-1}}. $$ 
\end{cor}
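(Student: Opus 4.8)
The plan is to reduce the multivariate statement to the single–variable Proposition \ref{PrChExt} by restricting $P$ to a suitable line. Given a nonzero $P\in\integers[x_1,\dots,x_n]$ of total degree $D$, write $P$ as a sum of its homogeneous pieces and let $P_D$ denote its top-degree homogeneous component, which is a nonzero form of degree $D$. The idea is to choose a direction $v=(v_1,\dots,v_n)$ along which the leading form does not vanish and which we can take with integer (or at least controlled rational) coordinates, and then look at the one-variable polynomial $t\mapsto P(tv)$.

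First I would note that $P_D$ is a nonzero homogeneous polynomial of degree $D$, so it does not vanish identically; since it is not the zero polynomial, there is an integer point $v\in\integers^n$ with $P_D(v)\neq 0$ — indeed one can take $v\in\{0,1,\dots,D\}^n$ by the Schwartz–Zippel / combinatorial Nullstellensatz observation that a nonzero polynomial of degree $D$ cannot vanish on the whole grid $\{0,\dots,D\}^n$. Then $Q(t):=P(tv)$ is a polynomial in $\integers[t]$ whose coefficient of $t^D$ equals $P_D(v)$, a nonzero integer, hence of absolute value $\geq 1$. Applying Proposition \ref{PrChExt} to $Q$ gives
$$ \sup_{[-1,1]}|Q(t)|\geq \frac{|P_D(v)|}{2^{D-1}}\geq \frac{1}{2^{D-1}}. $$
The remaining point is that for $t\in[-1,1]$ the point $tv$ has all coordinates in $[-D,D]$, not in $[-1,1]^n$, so this does not immediately give the claim with the cube $[-1,1]^n$; I would fix this by rescaling, i.e. replacing $t$ by $t/D$ (or choosing $v$ with coordinates in $\{0,1\}$ when possible), so that $tv\in[-1,1]^n$, at the cost of a factor $D^{D}$ in the denominator.

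The honest main obstacle is therefore the bookkeeping of constants: the naive line-restriction argument produces a bound like $1/(2^{D-1}D^{D})$ rather than the clean $1/2^{D-1}$ asserted. To recover the stated inequality one wants a direction $v$ with entries in $\{0,1\}$ (so that $[-1,1]\ni t\mapsto tv$ stays in the cube), and the correct tool is the combinatorial Nullstellensatz in the sharp form that a polynomial which is nonzero and has $x_i$-degree $d_i$ cannot vanish on a product set $\prod S_i$ with $|S_i|>d_i$; choosing $S_i=\{0,1\}$ works whenever each individual degree is $\le 1$, and in general one handles the one-variable-at-a-time reduction: apply Proposition \ref{PrChExt} in the variable $x_1$ (treating the other variables as parameters in $[-1,1]$) to the coefficient polynomial of the highest power of $x_1$ actually occurring, then induct on $n$, the base case $n=1$ being exactly Proposition \ref{PrChExt}. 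This inductive route keeps every supremum over $[-1,1]^n$ and loses only the factor $2^{D-1}$ coming from the Chebyshev bound, since the leading coefficient is an integer and hence $\geq 1$ in absolute value, giving precisely the stated estimate.
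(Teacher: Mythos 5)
Your final argument---inducting on the number of variables and applying Proposition \ref{PrChExt} in a single variable at a point of $[-1,1]^{n-1}$ where the leading-coefficient polynomial $a_k$ is (by the inductive hypothesis) at least $2^{1-\deg a_k}$, with the nonzero integer leading coefficient appearing at the bottom of the induction---is exactly the proof given in the paper, and the exponents telescope to give the stated bound (indeed $2^{2-D}$ at each composite step). You were also right to discard the line-restriction detour, which as you noted only produces the weaker constant with the extra $D^{D}$-type loss.
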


\begin{proof}
By induction. For $n=0$ or $1$ the result follows from Proposition \ref{PrChExt}.

Next, suppose the statement is proven for polynomials of $n-1$ variables. If $P$ does not depend
on $x_n$ then we are done. Otherwise
let $k>0$ be the degree of $P$ with respect to $x_n$. Then 
$$ P(x)=a_k(x_1, \dots, x_{n-1}) x_n^k+a_{k-1}(x_1,\dots, x_{n-1}) x_n^{k-1}+\dots+a_0(x_1,\dots, x_{n-1}) $$
where $a_k$ is the polynomial with integer coefficients of degree $D-k.$ Let
$$(\brx_1, \dots \brx_{n-1})=\arg\max_{[-1, 1]^{n-1}} |a_k(x_1, \dots, x_{n-1})|. $$ 
Then
$$ \sup_{[-1, 1]^n} |P(x_1,\dots x_{n-1}, x_n)|\geq \max_{x_n\in [-1, 1]}
|P(\brx_1,\dots \brx_{n-1}, x_n)|\geq |a(\brx_1, \dots,\brx_{n-1})| 2^{1-k}$$
$$\geq
2^{1+k-D} 2^{1-k}=2^{2-D} $$
completing the proof.
\end{proof}

\begin{prop}
\label{PrRemez}
{\sc (Remez inequality)} (see \cite{BG} or \cite[Theorem 1.1]{Y})
Let $B$ be a convex set in $\reals^n,$ $\Omega\subset B,$ and $P$ be a polynomial 
of degree $D.$ Then
$$ \sup_{B}|P|\leq C_B {\rm mes}^{-D} (\Omega) \sup_{\Omega} |P|. $$
\end{prop}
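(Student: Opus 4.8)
The plan is to prove the Remez inequality by reduction to the one–dimensional case, which is the classical statement about how much a polynomial of degree $D$ can grow outside a set of prescribed measure.

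First I would recall the one–variable Remez inequality: if $I\subset\reals$ is an interval of length $|I|$ and $\Omega\subset I$ has measure $\mes(\Omega)=s>0$, then $\sup_{I}|P|\leq T_D\!\left(\tfrac{2|I|-s}{s}\right)\sup_{\Omega}|P|$ for any polynomial $P$ of degree $D$, where $T_D$ is the Chebyshev polynomial; in particular $\sup_I|P|\leq \left(\tfrac{4|I|}{s}\right)^{D}\sup_\Omega|P|$ once $s\leq|I|$, and the case $s>|I|$ is even easier. This is the sharp form and can be quoted, or proved quickly by a compactness/extremal argument identifying the extremizer as a rescaled Chebyshev polynomial on the worst-case configuration where $\Omega$ is an interval at the end of $I$.

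Next, for the multivariate statement, fix $x_0\in B$ with $|P(x_0)|=\sup_B|P|$, and consider the slices of $B$ and $\Omega$ by lines through $x_0$. Integrating in polar-type coordinates centered at $x_0$, since $\mes(\Omega)$ is small there must exist a direction $\theta$ such that the one–dimensional slice $\Omega_\theta=\Omega\cap(x_0+\reals\theta)$, inside the chord $B_\theta=B\cap(x_0+\reals\theta)$, has length bounded below by a quantity comparable to $\mes(\Omega)^{1/n}$ up to a constant depending only on the geometry of $B$ (this is where convexity of $B$ and a lower bound on its inradius/volume enter, absorbed into $C_B$). Restricting $P$ to this line gives a one-variable polynomial of degree at most $D$, and applying the one-dimensional Remez inequality on the chord $B_\theta$ with the subset $\Omega_\theta$ yields
$$
\sup_B|P|=|P(x_0)|\leq \left(\frac{C|B_\theta|}{|\Omega_\theta|}\right)^{D}\sup_{\Omega_\theta}|P|\leq C_B\,\mes^{-D}(\Omega)\sup_\Omega|P|,
$$
which is the claimed bound after renaming constants.

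The main obstacle is the averaging step that produces a single good line: one needs a clean pigeonhole in the space of directions (or of parallel chords) showing that $\Omega$ cannot be spread so thinly that \emph{every} line through $x_0$ meets it in a set of length $\ll\mes(\Omega)^{1/n}$. The cleanest route is a Fubini argument along a fixed coordinate direction combined with an induction on $n$, treating $P$ as a polynomial in $x_n$ with coefficients in $x_1,\dots,x_{n-1}$ exactly as in the proof of Corollary \ref{CrMultiCheb} above, rather than literally using lines through $x_0$; this keeps the degree bookkeeping transparent and confines all dependence on the shape of $B$ to the constant $C_B$. I would present that inductive version, since it is the least delicate and reuses machinery already in hand.
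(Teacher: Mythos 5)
The paper does not prove this proposition at all: it is quoted from the literature (Brudnyi--Ganzburg \cite{BG}, Yomdin \cite{Y}), so there is no in-paper argument to compare yours with; your attempt has to stand on its own, and as written it does not.

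Your first route (slice through a point $x_0$ where $\sup_B|P|$ is attained and apply the one-variable Remez inequality on a good chord) is indeed the Brudnyi--Ganzburg strategy, but the quantitative pigeonhole you state is false: it is not true that some line through $x_0$ meets $\Omega$ in a set of length comparable to $\mes(\Omega)^{1/n}$. Take $\Omega$ to be a thin spherical shell $\{1\le|x-x_0|\le 1+t\}$: then $\mes(\Omega)\asymp t$ while \emph{every} line through $x_0$ meets $\Omega$ in length $2t\ll t^{1/n}$. What the polar-coordinate averaging actually gives is a ray with slice length at least $c_n\,\mes(\Omega)/\mathrm{diam}(B)^{n-1}$, i.e.\ linear in $\mes(\Omega)$; this weaker bound is in fact enough, since feeding it into the one-dimensional inequality on the chord through $x_0$ yields $\sup_B|P|\le (C_B/\mes(\Omega))^{D}\sup_\Omega|P|$, which is the form the cited theorem really has and all that Corollary \ref{CrRemez} and its sequels use (they happily absorb constants of the form $C^{D}$). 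So that route is salvageable once the exponent $1/n$ is removed.

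The route you say you would actually present --- Fubini in a fixed coordinate direction plus induction on $n$ ``as in Corollary \ref{CrMultiCheb}'' --- is where the genuine gap lies. The mechanism of Corollary \ref{CrMultiCheb} (extract the leading coefficient in $x_n$ and bound it below by the inductive hypothesis) depends entirely on the coefficients being integer polynomials; there is no such arithmetic lower bound here, so that induction has no analogue. Moreover, the natural fiber-by-fiber substitute (find a set $G$ of base points whose $x_n$-fibers meet $\Omega$ in measure $\gtrsim\mes(\Omega)$, apply one-dimensional Remez on those fibers, then apply the $(n-1)$-dimensional statement with $G$ as the new small set --- note that $\sup_{x_n}|P(x',x_n)|$ is not a polynomial in $x'$, so this is the only way to run it) compounds the losses multiplicatively and produces $\sup_B|P|\le C_B^{D}\,\mes(\Omega)^{-nD}\sup_\Omega|P|$, not the dimension-free exponent $-D$ in the statement. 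That dimension-free exponent is precisely the content of the Brudnyi--Ganzburg theorem and is what the slicing-through-the-maximum-point argument is for; it also matters downstream, since the explicit exponents in Corollary \ref{CrDiopSL2} and in \eqref{QuadExp} track $1/D_N$ rather than $1/(nD_N)$. So you should present the corrected slicing argument (or simply quote \cite{BG}), not the Fubini induction.
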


\begin{cor} \label{CrRemez}
Under the conditions of Proposition \ref{PrRemez}
$$ \mes(x\in B: |P(x)|\leq \eps)\leq \left(\frac{C_B \eps}{\sup_B |P|}\right)^{1/D}. $$
\end{cor}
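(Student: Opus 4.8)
The plan is to derive Corollary \ref{CrRemez} directly from the Remez inequality (Proposition \ref{PrRemez}) by a short measure-theoretic argument. First I would introduce the sublevel set $\Omega_\eps=\{x\in B: |P(x)|\leq \eps\}$. If $\mes(\Omega_\eps)=0$ there is nothing to prove, so assume $\mes(\Omega_\eps)>0$; in particular $P$ is not identically zero (otherwise $\Omega_\eps=B$ and the claim would be vacuous or trivial, a degenerate case one should dispose of at the outset, perhaps by noting that for $P\equiv 0$ the statement is interpreted appropriately). Then apply Proposition \ref{PrRemez} with $\Omega=\Omega_\eps$: this gives
$$ \sup_B |P| \leq C_B\, \mes^{-D}(\Omega_\eps)\, \sup_{\Omega_\eps} |P| \leq C_B\, \mes^{-D}(\Omega_\eps)\, \eps, $$
where the second inequality uses the defining property $\sup_{\Omega_\eps}|P|\leq \eps$.

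Rearranging this inequality is the whole content of the remaining step: dividing through by $\sup_B|P|$ (which is positive since $P\not\equiv 0$ and $B$ is convex with nonempty interior) and by $C_B\eps$, then taking $D$-th roots, yields
$$ \mes(\Omega_\eps) \leq \left(\frac{C_B\,\eps}{\sup_B|P|}\right)^{1/D}, $$
which is exactly the asserted bound. One should double-check the direction of the inequality and the placement of $\mes^{-D}$ versus $\mes^{D}$ when transcribing from Proposition \ref{PrRemez}, since that is the only place a sign or exponent error could creep in.

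There is essentially no obstacle here: the corollary is a formal rearrangement of the Remez inequality applied to the sublevel set itself. The only points requiring a word of care are the degenerate cases ($\mes(\Omega_\eps)=0$, or $P\equiv 0$, or $\sup_B|P|=0$), and making sure the constant $C_B$ matches the one appearing in Proposition \ref{PrRemez}. If one wanted to be fully careful about $\sup_B|P|$ possibly being attained only in a limiting sense, note $B$ convex bounded suffices; for unbounded $B$ one restricts attention to the relevant bounded situation, which is all that is used later. I would present the proof in three or four lines, as above.
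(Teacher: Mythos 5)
Your proof is correct and is exactly the paper's argument: the paper proves Corollary \ref{CrRemez} by applying Proposition \ref{PrRemez} with $\Omega=\{x\in B:|P(x)|\leq\eps\}$ and rearranging, precisely as you do. Your extra remarks on degenerate cases are harmless but not needed.
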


\begin{proof}
Apply Proposition \ref{PrRemez} with $\Omega=\{x\in B: |P(x)|\leq \eps\}.$
\end{proof}

\begin{cor}
\label{CrDiop}
If $P_N\in \integers[x_1, x_2, \dots, x_n]$ are polynomials of degree $D_N$ and $\eps_N$ is a sequnces such that
$\displaystyle \sum_N \eps^{1/D_N}<\infty $ then $|P(x_1, \dots x_n)|<\eps_N$ has 
only finitely many solutions for almost every $(x_1\dots x_n)\in \reals^n.$
\end{cor}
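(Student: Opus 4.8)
The plan is to deduce Corollary \ref{CrDiop} from the Borel--Cantelli lemma together with the quantitative sublevel estimate in Corollary \ref{CrRemez}. First I would fix a large radius $R$ and work on the cube $B = [-R,R]^n$ (or, equivalently, rescale to the unit cube and use Corollary \ref{CrMultiCheb} to control $\sup_B|P_N|$ from below); since $\reals^n$ is a countable union of such cubes, it suffices to prove the conclusion for almost every point of $B$.

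\medskip

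Next I would estimate, for each $N$, the measure of the ``bad set''
$$ \Omega_N = \{x\in B : |P_N(x)| \leq \eps_N\}. $$
Since $P_N$ has integer coefficients and is not identically zero, Corollary \ref{CrMultiCheb} (after rescaling $B$ to $[-1,1]^n$) gives a lower bound $\sup_B|P_N|\geq c_R 2^{-D_N}$. Feeding this into Corollary \ref{CrRemez} yields
$$ \mes(\Omega_N)\leq \left(\frac{C_B\,\eps_N}{\sup_B|P_N|}\right)^{1/D_N}\leq \left(C_B' 2^{D_N}\eps_N\right)^{1/D_N} = C_B'' \cdot 2 \cdot \eps_N^{1/D_N}, $$
where the constants depend only on $B$ (hence on $R$ and $n$). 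Thus $\sum_N \mes(\Omega_N) \lesssim \sum_N \eps_N^{1/D_N} < \infty$ by hypothesis.

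\medskip

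By the Borel--Cantelli lemma, $\mes(\limsup_N \Omega_N)=0$, i.e.\ for almost every $x\in B$ only finitely many of the inequalities $|P_N(x)|\leq\eps_N$ hold. Taking a countable exhaustion $\reals^n=\bigcup_{R\in\naturals} [-R,R]^n$ and intersecting the corresponding full-measure sets completes the argument. The one point that needs a little care is the interaction between the constant $C_B$ in the Remez inequality and the exponent $1/D_N$: as $D_N\to\infty$ the factor $C_B^{1/D_N}$ tends to $1$, so it is harmless, and likewise $2^{D_N/D_N}=2$; but one should make sure the statement of Corollary \ref{CrRemez} is applied with $B$ fixed independent of $N$, which is exactly why the reduction to a single cube is done first. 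The main (very mild) obstacle is therefore just bookkeeping of these $N$-independent constants; there is no analytic difficulty beyond what is already packaged in Corollaries \ref{CrMultiCheb} and \ref{CrRemez}.
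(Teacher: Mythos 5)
Your argument is correct and is essentially the paper's own proof: reduce to a cube, bound $\sup_B|P_N|$ from below via Corollary \ref{CrMultiCheb}, apply the Remez sublevel estimate of Corollary \ref{CrRemez} to get $\mes(\Omega_N)\lesssim \eps_N^{1/D_N}$, and conclude by Borel--Cantelli. The only (immaterial) difference is that you exhaust $\reals^n$ by growing cubes $[-R,R]^n$ containing $[-1,1]^n$, while the paper works with fixed cubes of side $2$.
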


\begin{proof}
It suffices to show this for a fixed cube $B$ with side 2. Then Corollaries \ref{CrMultiCheb} and 
\ref{CrRemez} give
$$ \mes(x\in B: |P_N(x)|\leq \eps_N)\leq \left(C 2^{D_N} \eps_N\right)^{1/D_N}
=\bar{C} \eps_N^{1/D_N}$$
so the statement follows from Borel-Cantelli Lemma.
\end{proof}

\subsection{Polynomial maps on $SL_2(\reals)$}

\begin{cor} 
\label{CrDiopSL2}
Let $m$ be a fixed number.

(a) Let
$P_N\in \integers((a_1, b_1, c_1, d_1),\dots, (a_m, b_m, c_m, d_m))$ be polynomials of degree $D_N.$ 
For $A_1, \dots A_m\in SL_2(\reals)$ with 
$A_j=\left(\begin{array}{cc} a_j & b_j \\ c_j & d_j\\\end{array}\right) $ let
$$ H_N(A_1, \dots, A_m)=P_N((a_1, b_1, c_1, d_1),\dots, (a_m, b_m, c_m, d_m)). $$
If $\displaystyle \sum_N \eps_N^{1/((m+2) D_N)}<\infty$ then
$ |H_N(A_1\dots A_m)|<\eps_N $
for only finitely many $N$ for almost every $(A_1,\dots A_m)\in (SL_2(\reals))^m. $

(b) Given  $g\in\naturals$ let 
$$G_g=\{(A_1, \dots A_{2g})\in (SL_2(\reals))^{2g}: 
[A_1, A_2] [A_3, A_4]\dots [A_{2g-1}, A_{2g}]=I\}. $$
Let $P_N\in \integers((a_1, b_1, c_1, d_1),\dots, (a_{2m}, b_{2m}, c_{2m}, d_{2m}))$ 
be polynomials of degree $D_N.$ Let
$$ H_N(A_1, \dots, A_{2g})=P_N((a_1, b_1, c_1, d_1),\dots, (a_{2g}, b_{2g}, c_{2g}, d_{2g})). $$
Assume that $H_N$ is not identically equal to 0 on $G_g.$
If 
$\displaystyle \sum_N \eps_N^{\delta_N}<\infty$
where $\delta_N=\frac{1}{(4g-2)(g+2) D_N}$ then
$ |H_N(A_1\dots A_{2g})|<\eps_N $
for only finitely many $N$ for almost every $(A_1,\dots A_{2g})\in G_g. $
\end{cor}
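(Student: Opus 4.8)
The plan is to deduce both statements from Corollary \ref{CrDiop} by expressing the quantity $|H_N|<\eps_N$ as a small-value event for a single integer polynomial in an appropriate number of free variables, and then matching exponents. For part (a), the map $(A_1,\dots,A_m)\mapsto(a_1,b_1,c_1,d_1,\dots,a_m,b_m,c_m,d_m)$ realizes $(SL_2(\reals))^m$ as a subvariety of $\reals^{4m}$ cut out by the $m$ equations $a_jd_j-b_jc_j=1$. I would parametrize each $SL_2(\reals)$ factor by three of its entries (say $a_j,b_j,c_j$, on the open dense chart $a_j\neq 0$, writing $d_j=(1+b_jc_j)/a_j$), giving $3m$ free coordinates; since $SL_2(\reals)$ has dimension $3$, Lebesgue measure on $(SL_2(\reals))^m$ (the relevant measure class) is absolutely continuous with respect to Lebesgue measure in these $3m$ coordinates on each chart, so it suffices to work chart by chart. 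Clearing the denominators $a_j$, the condition $|H_N|<\eps_N$ becomes $|\tilde P_N(a_1,b_1,c_1,\dots)|<\eps_N\cdot\prod_j|a_j|^{k_j}$ for an integer polynomial $\tilde P_N$ in $3m$ variables, and after restricting to a fixed cube the right side is $\leq C^{D_N}\eps_N$. Here the degree of $\tilde P_N$ is $O(D_N)$: each substitution $d_j\mapsto(1+b_jc_j)/a_j$ at most doubles the degree contribution from that variable, so $\deg\tilde P_N\leq (m+2)D_N$ after a careful count (the $m$ from clearing denominators, the $2$ from the extra $b_jc_j$ terms — one should verify the precise constant, but the order is what matters). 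Then Corollary \ref{CrDiop} in $3m$ variables with exponent $1/\deg\tilde P_N\geq 1/((m+2)D_N)$ and Borel–Cantelli give the claim; one must also note that $\tilde P_N\not\equiv 0$ since $H_N$ restricted to the chart determines it and vanishing on a chart of a connected smooth variety forces $H_N\equiv 0$, contrary to hypothesis — wait, in part (a) there is no non-vanishing hypothesis, but if $H_N\equiv 0$ on $(SL_2(\reals))^m$ the conclusion is vacuous, so we may assume $\tilde P_N\not\equiv 0$.

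For part (b), the same strategy applies but one must handle the constraint variety $G_g$, which has dimension $\dim(SL_2(\reals))^{2g}-\dim SL_2(\reals)=3(2g)-3=6g-3=3(2g-1)$. The relation $[A_1,A_2]\cdots[A_{2g-1},A_{2g}]=I$ is $3$ scalar equations (an $SL_2$-valued constraint), so I would solve for one of the factors, say express $A_{2g}$ (or $A_{2g-1}A_{2g}=[A_{2g-1},A_{2g-1}]^{-1}\cdots$, arranged so the last commutator absorbs the product of the others) as a rational function of the remaining $2g-1$ matrices; generically this is possible and realizes a dense chart of $G_g$ as the graph of a rational map from $(SL_2(\reals))^{2g-1}$. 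Parametrizing the $2g-1$ free factors by $3$ entries each gives $3(2g-1)=6g-3$ free coordinates, consistent with $\dim G_g$, and again Lebesgue-on-$G_g$ pulls back to be absolutely continuous with respect to Lebesgue on these coordinates. Substituting the rational expressions for the entries of $A_{2g}$ into $H_N$ and clearing denominators yields an integer polynomial $\tilde P_N$ in $6g-3$ variables whose degree I claim is $O(g\cdot D_N)$ — the rational map expressing $A_{2g}$ has entries of bounded degree in the others (depending on $g$), so substituting into a degree-$D_N$ polynomial and clearing denominators multiplies the degree by a factor depending on $g$; tracking constants should give $\deg\tilde P_N\leq (4g-2)(g+2)D_N$, matching the stated $\delta_N$. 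The hypothesis that $H_N\not\equiv 0$ on $G_g$ guarantees $\tilde P_N\not\equiv 0$ (vanishing on a dense chart of the irreducible — or at least, having an irreducible component containing the chart — variety $G_g$ forces $H_N|_{G_g}\equiv 0$). Then Corollary \ref{CrDiop} with $n=6g-3$ and the Borel–Cantelli Lemma finish the proof.

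The main obstacle I anticipate is the bookkeeping of the degree bounds, i.e. justifying the explicit constants $(m+2)$ and $(4g-2)(g+2)$, and in part (b) controlling the degree of the rational parametrization of $G_g$ near a generic point — one needs that solving the $SL_2$-relation for one factor can be done by a rational map of degree bounded explicitly in $g$, uniformly on a dense open set. A secondary point requiring care is the measure-theoretic reduction: one must check that $(SL_2(\reals))^m$ and $G_g$ carry a natural smooth-measure class (e.g. from Haar measure, or from the Riemannian volume of the defining embedding) in which "almost every" makes sense, and that this class is absolutely continuous, chart by chart, with respect to Lebesgue measure in the chosen local coordinates, so that Corollary \ref{CrDiop} (which is a statement about Lebesgue measure on $\reals^n$) transfers. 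Both points are routine in principle but are where the real work lies; the algebraic-geometry input (irreducibility of $G_g$, or at least connectedness of the relevant chart, so that a nonzero polynomial cannot vanish on a dense open subset) should be cited or established separately.
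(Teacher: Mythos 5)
Your proposal follows essentially the same route as the paper's own proof: in (a) the paper likewise restricts to the chart $|a_j|>\delta$, substitutes $d_j=(1+b_jc_j)/a_j$, clears denominators to obtain an integer polynomial of degree at most $(m+2)D_N$ in the $3m$ remaining entries, and invokes Corollary \ref{CrDiop}; in (b) it rewrites the relation as $[A_1,A_2]\cdots[A_{2g-3},A_{2g-2}]A_{2g-1}A_{2g}A_{2g-1}^{-1}=A_{2g}$ to eliminate $A_{2g}$ rationally with degree controlled by the word length $4g-2$, and then reduces to part (a). The issues you flag (the precise degree constants, the rational elimination on a dense chart of $G_g$, and transferring the a.e.\ statement through local coordinates) are exactly the steps the paper itself treats only briefly, so your sketch is at the same level of detail and takes the same approach.
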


\begin{proof}
(a) It suffices to prove the statement under the assumption that $|a_j|>\delta$ for some fixed $\delta>0.$ Then
$d_j=\frac{1+b_j c_j}{a_j}$ and so
$$ H(A_1,\dots A_m)=\frac{\tP_N((a_1, b_1, c_1),\dots (a_m, b_m, c_m))}{\prod_{j=1}^m a_j^{d_N}} $$
where $\tP_N$ is a polynomial of degree $\tD_n\leq (m+2) d_N.$ Thus if $|P_N|\leq \eps_N$ then
$|\tP_N|\leq \teps_N:=\frac{\eps_N}{\delta^{(m+2) D_N}}.$ Since
$$\sum_N \teps_N^{1/\tD_N}\leq \frac{1}{\delta} \sum_N \eps_N^{1/(m+2) D_N}<\infty $$
the result follows from Corollary \ref{CrDiop}. 

(b) Rewriting the equations defining $G_g$ in the form
$$ [A_1, A_2]\dots [A_{2g-3}, A_{2g-2}] A_{2g-1} A_{2g} A_{2g-1}^{-1}=A_{2g} $$
we can express the entries of $A_{2g}$ as rational functions of the entries of the other matrices.
Arguing as in part (a) we can reduce the inequality 
$|P_N(A_1,\dots A_{2g-1}, A_{2g})|<\eps$ to
$|\hP_N(A_1,\dots A_{2g-1})|<\heps_N$ where $\hP_N$ is the polynomial of degree $(4g-2) D_N.$
Now the result follows from part (a). 
\end{proof}

\begin{cor}
For each $\eta>0$ for almost every $A_1, \dots A_m\in SL_2(\reals)$ the inequality
$$ ||W(A_1, \dots A_m)-I||>(2m-1)^{-|W|^2 (m+2+\eta)} $$
holds for all except for finitely many words $W.$
\end{cor}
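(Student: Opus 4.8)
The plan is to regard the matrices $W(A_1,\dots,A_m)$, with $W$ ranging over all nontrivial reduced words in $A_1^{\pm1},\dots,A_m^{\pm1}$, as a countable family of integral polynomial maps on $(SL_2(\reals))^m$, and to apply Corollary~\ref{CrDiopSL2}(a) to a single well‑chosen scalar entry of each $W(A_1,\dots,A_m)-I$. Throughout I assume $m\ge2$, so that $2m-1\ge3>1$, and I fix the operator norm on $2\times2$ matrices, for which $|M_{ij}|\le\|M\|$ for every entry.

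First I would set up the polynomials. Writing each $A_j$ as the matrix with entries $a_j,b_j,c_j,d_j$ and $A_j^{-1}$ as the matrix with entries $d_j,-b_j,-c_j,a_j$, and multiplying out the $|W|$ factors of $W(A_1,\dots,A_m)$, one sees that every entry of $W(A_1,\dots,A_m)-I$ is a polynomial with integer coefficients in the $4m$ variables $a_j,b_j,c_j,d_j$, of total degree at most $|W|$. For each nontrivial $W$ I would then pick an entry $H_W$ of $W(A_1,\dots,A_m)-I$ that is not constant as a function on $(SL_2(\reals))^m$. Such an entry exists: if all entries were constant on the connected set $(SL_2(\reals))^m$, then $W(A_1,\dots,A_m)-I$ would be a constant matrix, necessarily $0$ (evaluate at $A_1=\dots=A_m=I$), i.e.\ $W(A_1,\dots,A_m)=I$ identically, which is impossible because $SL_2(\reals)$ contains a free subgroup of rank $m$. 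Hence $H_W$ is a polynomial of some degree $D_W$ with $1\le D_W\le|W|$ which does not vanish identically on $(SL_2(\reals))^m$, and $\|W(A_1,\dots,A_m)-I\|\le\eps$ forces $|H_W(A_1,\dots,A_m)|\le\eps$.

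Next I would feed the family $\{H_W\}_{W\ne1}$, enumerated by $\naturals$, into Corollary~\ref{CrDiopSL2}(a) with $\eps_W:=(2m-1)^{-|W|^2(m+2+\eta)}$. Since $0<\eps_W<1$ and $D_W\le|W|$ we have $\eps_W^{1/((m+2)D_W)}\le\eps_W^{1/((m+2)|W|)}=(2m-1)^{-|W|(m+2+\eta)/(m+2)}$, and as there are $2m(2m-1)^{\ell-1}$ reduced words of length $\ell\ge1$,
\[
\sum_{W\ne1}\eps_W^{1/((m+2)D_W)}\le\sum_{\ell\ge1}2m(2m-1)^{\ell-1}(2m-1)^{-\ell(m+2+\eta)/(m+2)}=\frac{2m}{2m-1}\sum_{\ell\ge1}(2m-1)^{-\ell\eta/(m+2)}<\infty .
\]
Hence Corollary~\ref{CrDiopSL2}(a) applies and gives: for almost every $(A_1,\dots,A_m)\in(SL_2(\reals))^m$, the inequality $|H_W(A_1,\dots,A_m)|<\eps_W$ holds for only finitely many $W$; for all other $W$ we get $\|W(A_1,\dots,A_m)-I\|\ge|H_W(A_1,\dots,A_m)|\ge\eps_W=(2m-1)^{-|W|^2(m+2+\eta)}$. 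Running the argument with $\eta/2$ in place of $\eta$ upgrades ``$\ge$'' to ``$>$'' for $|W|$ large, and the case of an arbitrary norm ($\ge c\,\|\cdot\|_{\mathrm{op}}$) follows in the same way, since $c\,\eps_W>(2m-1)^{-|W|^2(m+2+\eta)}$ once $|W|$ is large.

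The proof is bookkeeping on top of Corollary~\ref{CrDiopSL2}(a), but I expect the care to be concentrated in three places. One must use a single entry of $W(A)-I$, of degree $\le|W|$, rather than something like $\|W(A)-I\|^2$ (degree $2|W|$): the Diophantine exponent $1/((m+2)D_W)$ then consumes exactly one power of $|W|$, which, against the $\sim(2m-1)^{|W|}$ words of length $|W|$, is precisely what makes the series converge — and this is why the stated bound carries $|W|^2$ and the constant $m+2$ in the exponent. One must check that the chosen entry is a genuinely nonconstant regular function on the variety $(SL_2(\reals))^m$, so that Corollary~\ref{CrDiopSL2}(a) is applicable; this is where $m\ge2$ and the existence of free subgroups of $SL_2(\reals)$ enter. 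Finally one must keep the degree bound, the word count and the Diophantine exponent aligned, which is the displayed computation above.
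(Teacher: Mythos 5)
Your proposal is correct and follows essentially the same route as the paper: take a single (nonvanishing) entry of $W(A_1,\dots,A_m)-I$ as an integer polynomial of degree at most $|W|$, apply Corollary \ref{CrDiopSL2}(a) with $\eps_W=(2m-1)^{-|W|^2(m+2+\eta)}$, and check summability against the $\sim(2m-1)^{|W|}$ words of each length, exactly as in the paper's computation. The only differences are cosmetic refinements (verifying the chosen entry is nonconstant, the strict-inequality and norm-constant adjustments, and the implicit restriction $m\geq 2$), which the paper glosses over.
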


\begin{proof}
If $||W(A_1, \dots A_m)-I||\leq \eps$ then all entries of $W-I$ are $\eps$ close to I. Conisdering for example, 
the condition $W_{11}(A_1, \dots A_m)-1$ we 
get a polynomial of degree $|W|.$ Therefore, by Corollary \ref{CrDiopSL2} it suffices to check that
$$ \sum_W (2m-1)^{-\frac{|W|^2 (m+2+\eta)}{|W|(m+2)}} <\infty $$
but the above sum equals to
$$ \sum_D (2m-1)^D (2m-1)^{-D-D(\eta/(m+2)}=\sum_D (2m-1)^{-\eta D/(m+2)}<\infty. \qedhere $$
\end{proof}

\begin{cor}
For $\cA=(A_1\dots A_{2g})\in G_g$ let $S_\cA$ be the surface defined by $\cA.$ Given
a word $W$ let $l(W, \cA)$ be the length of the closed geodesic in the homotopy class defined by $W.$
Then for each $\eta>0$ the following holds for almost all $\cA\in G_g$

There exists a constant  $K=K(\cA)$ such that for each pair $W_1, W_2$ either 
$$l(W_1, \cA)=l(W_2, \cA)\text{ or } $$
\begin{equation}
\label{QuadExp}
|l(W_1, \cA)-l(W_2, \cA)|\geq K (4g-1)^{-[(2g+4)(4g-2)+\eta] \max^2(|W_1|, |W_2|)}. 
\end{equation}
\end{cor}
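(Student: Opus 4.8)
The plan is to reduce the length-spectrum separation to the matrix-entry Diophantine estimate of Corollary \ref{CrDiopSL2}(b), exactly as in the proof of Theorem \ref{ThAlgGen} but now in the almost-everywhere rather than algebraic setting. First I would recall that for a hyperbolic surface $S_\cA$ the length $l(W,\cA)$ of the closed geodesic in the homotopy class of a word $W$ satisfies $2\cosh(l(W,\cA)/2)=|\tr W(A_1,\dots,A_{2g})|$, so that $e^{l(W,\cA)/2}$ is a root of the quadratic $x^2-(\tr W)x+1=0$ and hence is an explicit algebraic function of the entries of the $A_j$. Thus $l(W_1,\cA)=l(W_2,\cA)$ if and only if $|\tr W_1|=|\tr W_2|$, and otherwise $|e^{l(W_1,\cA)/2}-e^{l(W_2,\cA)/2}|$ controls, up to bounded factors, the quantity $|\tr W_1(A_1,\dots,A_{2g})|-|\tr W_2(A_1,\dots,A_{2g})|$, which (after squaring to remove absolute values) is a polynomial $H=H_{W_1,W_2}$ in the $4\cdot 2g$ entries of degree $O(\max(|W_1|,|W_2|))$ — in fact degree at most $2\max(|W_1|,|W_2|)$, since $\tr W$ has degree $|W|$ in the entries.

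Next I would set up the Borel–Cantelli bookkeeping. Enumerate all pairs $(W_1,W_2)$ of reduced words; for the pair with $D:=\max(|W_1|,|W_2|)$ the associated polynomial $H$ has degree $\le CD$ for an absolute constant $C$, and the number of such pairs with $\max(|W_1|,|W_2|)=D$ is at most $(4g-1)^{2D}$ or so (each word of length $\le D$ in $2g$ generators and their inverses). I would apply Corollary \ref{CrDiopSL2}(b) with $m=2g$: the exceptional exponent there is $\delta_D=\frac{1}{(4g-2)(g+2)\deg H}$, so with $\deg H$ of size comparable to $D$ and $\eps=\eps_{W_1,W_2}$ chosen as the right-hand side of \eqref{QuadExp}, one needs $\sum_{(W_1,W_2)} \eps_{W_1,W_2}^{\delta}<\infty$. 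Plugging in $\eps \approx (4g-1)^{-[(2g+4)(4g-2)+\eta]D^2}$ and $\delta \approx \frac{1}{(4g-2)(g+2)\cdot (2D)}$ turns the summand into $(4g-1)^{-c\,\eta D}$ times the polynomial factors, which is summable over $D$; the precise constants $(2g+4)$ and the factor accounting for $\deg H \le 2D$ versus $D$ are exactly what make the stated exponent work, so the arithmetic has to be done carefully but is routine. Corollary \ref{CrDiopSL2}(b) then gives, for a.e.\ $\cA\in G_g$, that $|H_{W_1,W_2}(\cA)|<\eps_{W_1,W_2}$ for only finitely many pairs, and enlarging $K$ to absorb the finitely many exceptions yields \eqref{QuadExp}.

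The one genuine subtlety — the point where I would have to be careful rather than mechanical — is verifying the hypothesis of Corollary \ref{CrDiopSL2}(b) that $H_{W_1,W_2}$ is not identically zero on $G_g$. If $|\tr W_1|$ and $|\tr W_2|$ agree identically on $G_g$, then $l(W_1,\cA)=l(W_2,\cA)$ for every $\cA$ and the pair falls into the first alternative of the conclusion, so no separation estimate is needed; otherwise $H_{W_1,W_2}\not\equiv 0$ on $G_g$ and the corollary applies. Thus the dichotomy in the statement is not an artifact but precisely the mechanism that lets us discard the "bad" pairs: the set of pairs with $l(W_1,\cA)=l(W_2,\cA)$ for a.e.\ $\cA$ is exactly the set on which $H_{W_1,W_2}\equiv 0$ on $G_g$, and for these the left alternative holds for \emph{all} $\cA\in G_g$. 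I would also note the harmless point that passing from $|\tr W_1|-|\tr W_2|$ to a polynomial requires squaring, e.g.\ working with $(\tr W_1)^2-(\tr W_2)^2$ or $((\tr W_1)^2-(\tr W_2)^2)\cdot(\text{sign corrections})$, and then translating a lower bound on that polynomial back to a lower bound on $|l(W_1,\cA)-l(W_2,\cA)|$ via the mean value theorem applied to $x\mapsto 2\operatorname{arccosh}(x/2)$, which only costs factors polynomial in $\max(|W_1|,|W_2|)$ — again absorbed into $K$ and the $+\eta$ slack.
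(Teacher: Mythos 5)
There is a genuine gap, and it sits exactly at the point you flagged as ``routine'': the squaring step. When you replace $|\tr W_1|-|\tr W_2|$ by the polynomial $(\tr W_1)^2-(\tr W_2)^2$, you double the degree to $2\max(|W_1|,|W_2|)$, and the exponent in Corollary \ref{CrDiopSL2}(b) becomes $\delta\approx\frac{1}{2(4g-2)(g+2)D}$ with $D=\max(|W_1|,|W_2|)$. Plugging in $\eps\approx(4g-1)^{-[(2g+4)(4g-2)+\eta]D^2}$ gives $\eps^{\delta}\approx(4g-1)^{-D\left(1+\frac{\eta}{2(4g-2)(g+2)}\right)}$, while the number of pairs with $\max(|W_1|,|W_2|)=D$ is of order $(4g-1)^{2D}$; the Borel--Cantelli sum then behaves like $\sum_D(4g-1)^{+D(1-o(1))}$ and diverges. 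The stated constant is calibrated precisely for degree $D$, via the identity $\frac{(2g+4)(4g-2)}{(4g-2)(g+2)}=2$, which exactly cancels the entropy $2$ of the pair count; with degree $2D$ you only cancel half of it, so your route would prove \eqref{QuadExp} with the doubled constant $2(2g+4)(4g-2)$, not the one claimed. The paper avoids this by working directly with $P_{W_1,W_2}(\cA)=\tr W_1(\cA)-\tr W_2(\cA)$, a polynomial of degree $\max(|W_1|,|W_2|)$; the sign ambiguity you were squaring away is handled by also treating $\tr W_1+\tr W_2$ (another degree-$D$ polynomial), each of which is either identically zero on $G_g$ (first alternative of the corollary, as you correctly observed) or subject to Corollary \ref{CrDiopSL2}(b); doubling the number of polynomials is harmless, doubling the degree is not.

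A second, more minor inaccuracy: converting a lower bound on the trace difference into a lower bound on $|l(W_1,\cA)-l(W_2,\cA)|$ via $x\mapsto 2\operatorname{arccosh}(x/2)$ costs a factor of order $e^{-c\,l}\sim e^{-c'|W_1|}$, i.e.\ exponential in the word length, not ``polynomial in $\max(|W_1|,|W_2|)$'' as you wrote. This does not hurt the argument, because the main bound decays like $(4g-1)^{-c\max^2(|W_1|,|W_2|)}$ and the $\eta$-slack on the quadratic exponent absorbs any factor that is exponential in $\max(|W_1|,|W_2|)$ -- this is exactly how the paper handles it, with the intermediate bound $|l(W_1,\cA)-l(W_2,\cA)|\geq C\,|P_{W_1}(\cA)-P_{W_2}(\cA)|\,e^{-|W_1|D}$ and the remark that $\eta$ is arbitrary -- but the justification should be stated that way rather than as a polynomial loss.
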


\begin{remark}
  Recall that \cite{Randol1} shows that for any hyperbolic surface the length spectrum has unbounded multiplicity
  so there are many pairs of non conjugated words there the first alternative of the corollary holds.
\end{remark}  

\begin{remark}
Note that \eqref{EqWL-GL} shows that $l(W_1, \cA)$ can be close to $l(W_2, \cA)$ only if the lengths
of $W_1$ and $W_2$ are of the same order. Thus \eqref{QuadExp} implies that for almost every $\cA$ there are
constants $K,$ $R$ such that
$$ |l(W_1, \cA)-l(W_2, \cA)|\geq K e^{-R(\cA) l^2(W_1, \cA)}. $$
\end{remark}

\begin{proof}
Let $P_{W}(\cA)=\tr(W(\cA)).$ Since $P_W(\cA)=2\cosh(l(W,\cA)/2),$ it follows that if $l(W_1(\cA))$ is 
close to $l(W_2(\cA))$ then
$$l(W_1, \cA)-l(W_2, \cA)|\geq  C |P_{W_1}(\cA)-P_{W_2}(\cA)| e^{-|W_1|D} .$$
Therefore it suffices to show that if $l(W_1,\cA)\neq l(W_2, \cA)|$ then
$$ |P_{W_1}(\cA)-P_{W_2}(\cA)|\geq \tK e^{-|W_1|D} (4g-1)^{-[(2g+4)(4g-2)+\eta] \max^2(|W_1|, |W_2|)}. $$
Since $\eta$ is arbitrary, we can actually check that
$$ |P_{W_1}(\cA)-P_{W_2}(\cA)|\geq K (4g-1)^{-[(2g+4)(4g-2)+\eta] \max^2(|W_1|, |W_2|)} .$$
To verify this we will show that for almost all $\cA\in G_m$ the inequality
$$ |P_{W_1}(\cA)-P_{W_2}(\cA)|<  (4g-1)^{-[(2g+4)(4g-2)+\eta] \max^2(|W_1|, |W_2|)} $$
has only finitely many solutions. Let 
$P_{W_1, W_2}(\cA)=P_{W_1}(\cA)-P_{W_2}(\cA).$ It is a polynomail of degree 
$\max(|W_1|, |W_2|).$ So by Corollary \ref{CrDiopSL2}(b) it suffices to check that
$$ \sum_{W_1, W_2} (4g-1)^{-\frac{[(2g+4)(4g-2)+\eta] \max(|W_1|, |W_2|)}{(4g-2)(g+2)}}<\infty $$
There are at most $(4g-1)^{2k}$ pairs $(W_1, W_2)$ with $k=\max(W_1, W_2)$ so the last sum is estimated by
$$ \sum_{k} (4g-1)^{2k} (4g-1)^{-\frac{[(2g+4)(4g-2)+\eta]k}{(4g-2)(g+2)}}=
\sum_k (4g-1)^{-\frac{\eta k}{(4g-2)(g+2)}}<\infty $$
proving the result.
\end{proof}

\section{Open problems.}
(1) A suitable version of Theorem \ref{ThAlgGen} should hold for other symmetric spaces.
In particular, recall that arithmetic manifolds appear as fundamental 
domains $G/\Gamma$ where $G$ is a connected semi-simple algebraic 
$\reals$-group without compact factors of $\reals$-rank $\geq 2$, and $\Gamma$ 
is a lattice in $G$ (cf. \cite{Margulis74,Margulis75,Margulis77}). 
Thus we expect that a version of Theorem
\ref{ThAlgGen} should hold in higher rank setting. Note however, that for higher rank symmetric spaces
closed orbits are not
isolated but appear in families.

(2) The proof of Theorem \ref{thm:smallgapneg} relies on localized perturbations. Therefore it does not work in the
analytic category. We expect that Theorem \ref{thm:smallgapneg} is still valid for analytic metrics but the proof
would require new ideas.

(3) It is likely that an explicit lower bound for the gaps in the length spectrum could also be obtained
for {\em prevalent} set of negatively curved metrics (see \cite{Kal} for related results) but we do not
pursue this question here.

\medskip
\centerline{\bf Acknowledgements.}   \smallskip

The authors started discussing questions about gaps in the length spectra in 2005 when both 
were participating in the work of the thematic program ``Time at work'' at the Institut Henri 
Poincar\'e in 2005.  

The authors thank D. Popov for stimulating their interest in this problem, and 
Y. Yomdin and V. Kaloshin for discussions related to
Sections \ref{ScH2-KR} of the present paper.  The authors would also like to thank 
A. Glutsuk, N. Kamran, A. Katok, P. Sarnak and L. Silberman for stimulating discussions.


\end{document}